\theoremstyle{plain}
\newtheorem{thm}{Theorem}
\newtheorem{lem}{Lemma}
\newtheorem{rem}{Remark}
\newtheorem{pro}{Proposition}
\newtheorem{defi}{Definition}
\begin{document}

\title[Geometrization]
{Geometrization of sub-hyperbolic semi-rational branched coverings}
\author{Tao Cheng and Yunping Jiang}

\address{Tao Cheng: Department Of Mathematics,
East China Normal University,
Shanghai, 200241,
People'S Republic Of China}
\email{tcheng@math.ecnu.edu.cn}

\address{Yunping Jiang: Department Of Mathematics,
Queens College Of Cuny,
Flushing, New York 11367, Usa
And
Department Of Mathematics,
Cuny Graduate Center,
New York, New York 10016, Usa}
\email{yunping.jiang@qc.cuny.edu}

\thanks{The first author is partially supported by National Natural Science Foundation of China (No.11001081 and No.11071074) and the second author is partially supported by
a Simons Collaboration Grant for Mathematics and by a grant from Academy of Mathematics and Systems Science and from the Morningside Center of Mathematics at the Chinese Academy of Sciences and by PSC-CUNY awards.}

\subjclass[2000]{58F23, 30D05}

\begin{abstract}
Given a sub-hyperbolic semi-rational branched covering which is not CLH-equivalent a rational map,
it must have the non-empty canonical Thurston obstruction.
By using this canonical Thurston obstruction, we decompose this dynamical system in this paper into several sub-dynamical systems. Each of these sub-dynamical systems
is either a post-critically finite type branched covering or a sub-hyperbolic semi-rational type branched covering.
If a sub-dynamical system is a post-critically finite type branched covering with a hyperbolic orbifold, then it has no Thurston obstruction
and is combinatorially equivalent to a unique post-critically finite rational map (up to conjugation by an automorphism of the Riemann sphere)
and, more importantly, if a sub-dynamical system is a sub-hyperbolic semi-rational type branched covering with hyperbolic orbifold, we prove in this paper that it has no Thurston obstruction
and is CLH-equivalent to a unique geometrically finite rational map (up to conjugation by an automorphism of the Riemann sphere).
\end{abstract}

\maketitle

\section{Introduction}

It has been an interesting and important problem for the last thirty years
to discover a necessary and sufficiently condition for a branched covering
of the two-sphere to be realized by a rational map.

Thurston gave a first necessary and sufficiently condition such that a post-critically finite
branched covering (i.e., the set of post-critical orbits is finite) can be realized by a rational map under combinatorial equivalence.
A complete proof of Thurston's result was written by Douady and Hubbard in~\cite{DH}.
In this discovery, Thurston defined a topological obstruction which is called a Thurston obstruction now.
Then a post-critically finite branched covering with a hyperbolic orbifold is combinatorial equivalent to a rational map if and only if
it has no Thurston obstruction. Moreover, the realized rational map is unique up to conjugation
by an automorphism of the Riemann sphere. Pilgrim~\cite{Pi1} further proved that if $f$ with hyperbolic orbifold has a Thurston obstruction, then it must have the canonical one.
Based on this canonical Thurston obstruction, the two-sphere now can be decomposed into a collection of spheres with
holes. After replacing every hole with a disk marked by a point, one can obtain a collection of topological two-spheres and
a new branched covering on each periodic two-sphere. Pilgrim~\cite{Pi2} asked the following question: Does this new branched
covering if it has a hyperbolic orbifold has a Thurston obstruction? Recently, Selinger~\cite{Se} proved that this new branched
covering if it has a hyperbolic orbifold, then it has no Thurston obstruction and thus is combinatorially equivalent to a rational map.
Bonnot and Yampolsky~\cite{BY} soon proposed another approach to prove this new result by applying Minsky's theorem
in~\cite{Mi} and Haissinsky's theorem in~\cite{Ha}.

Things became very different when one turns to geometrically finite branched coverings.
In~\cite{CJ} (see also~\cite{CJS}), we have showed that there exists a geometrically finite branched
covering such that it has no Thurston obstruction and it is not
combinatorially equivalent to any rational map. Based on this counter-example,
an important type of a branched covering named as a sub-hyperbolic
semi-rational branched covering is considered in~\cite{CJ}, ~\cite{CJS}. Furthermore, a new
combinatorial equivalence called the CLH equivalence (combinatorial and
locally holomorphic equivalence) is introduced in~\cite{CJ}, ~\cite{CJS}.
Furthermore, in~\cite{CJS} (see also in~\cite{CT}) and in~\cite{ZJ},
by using two completely different methods, we have proved that a sub-hyperbolic semi-rational branched covering is CLH equivalent to
a rational map if and only if $f$ has no Thurston obstruction. Moreover, the realized rational map is unique up to conjugation by an automorphism of the Riemann sphere.
We would like to note that the orbifold associated to a sub-hyperbolic semi-rational branched covering is always hyperbolic due to the set of post-critical orbits is infinite. Recently, we proved in~\cite{ChJ} that if a sub-hyperbolic semi-rational branched covering has a Thurston obstruction, then it must have the canonical one.

Similarly, based on this canonical Thurston obstruction, the two-sphere now can be decomposed into a collection of spheres with
holes. After replacing every hole with a disk marked by a point, one can obtain a collection of topological two-spheres, and
a collection of new branched coverings on periodic two-spheres. These new branched coverings can be either post-critically finite type
or sub-hyperbolic semi-rational type branched covering. In this paper, we would like to study the problem
that new branched coverings have Thurston obstructions or not?

{\em Our main result (Theorem~\ref{mainth}) says that these new branched coverings have no Thurston obstruction.
More precisely, if a new branched covering is a post-critical finite type branched covering with a hyperbolic orbifold,
then it has no Thurston obstruction and is combinatorially equivalent to a unique post-critically finite rational map
(up to conjugation by an automorphism of the Riemann sphere) and if a new branched covering is a sub-hyperbolic semi-rational
type branched covering with hyperbolic orbifold, then it has no Thurston obstruction and is CLH-equivalent to
a unique geometrically finite rational map (up to conjugation by an automorphism of the Riemann sphere).}

In the post-critically finite case, one works on a finite-dimensional Teichm\"uller space
of the Riemann sphere minus the set of post-critical orbits. Therefore, one can use
Minsky's theorem in~\cite{Mi}, Haissinsky's
theorem in~\cite{Ha} and the proof in Douady-Hubbard's paper~\cite{DH} directly. Actually, the first two theorems
play crucial roles in Bonnot and Yampolsky's work in~\cite{BY} and the last one is crucial in~\cite{Se}.
But in the sub-hyperbolic semi-rational branched covering case, one works on an infinite-dimensional Teichm\"uller space of the Riemann sphere minus
a set of finitely many points and a set of finitely many topological disks.
Thus one can not use Minsky's theorem in~\cite{Mi}, Haissinsky's
theorem in~\cite{Ha} and the proof in Douady and Hubbard's paper~\cite{DH} directly.
{\em A major work in this paper is to overcome this difficulty}.

This paper is organized as follows. In Section 2, we review definitions
of post-critically finite rational maps and sub-hyperbolic semi-rational branched coverings.
We also review the combinatorial equivalence and the CLH-equivalence. We will state the existence
of the canonical Thurston obstruction for a sub-hyperbolic semi-rational branched covering
which is not CLH-equivalent to a geometrically finite rational map proved in~\cite{ChJ}.
In Section 3, we will decompose the two-sphere into several Riemann surfaces which are the Riemann sphere
minus some points and some disks. In the decomposition theorem (Theorem~\ref{decop}), we will prove that
under the action of $f$, each of these new Riemann surfaces is eventually periodic and at least one is periodic.
For each periodic Riemann surface, we will add a disk marked by a point to each hole and then extend it into a two-sphere. This will be worked out in Section 4. In the same section, we will prove our extension theorem (Theorem~\ref{ext}). This theorem says that for every periodic two-sphere with period $k>0$, we can define a new branched covering $\tilde{f}$ extending the original $f^{k}$ to the disks attached on all holes such that all marked points are super-attractive fixed point of $\tilde{f}$.
The new map $\tilde{f}$ is either a post-critically finite type branched covering or a sub-hyperbolic semi-rational type branched covering.
Finally, in Section 5, we will prove our main result (Theorem~\ref{mainth}). The main result says that if the new branched covering is a post-critically finite type branched covering with a hyperbolic orbifold, then it has no Thurston obstruction and is combinatorially equivalent to a rational map and that if the new branched covering is a sub-hyperbolic and semi-rational type branched covering with a hyperbolic orbifold, then it has no Thurston obstruction and is CLH-equivalent to a rational map.

\section{Post-critically finite and Sub-hyperbolic semi-rational branched coverings}

Let $\mathbb{S}^2$ be the topological two-sphere and $f:
\mathbb{S}^2\rightarrow \mathbb{S}^2$ be an orientation-preserving
branched covering of degree $d\geq 2$. Denote by $deg_xf$ the local
degree of $f$ at $x$. The set of branched points is
$$
\Omega_f=\{c\in \mathbb{S}^2: deg_cf\geq 2\}.
$$
The post-critical set $P_f$ by definition is
\[
P_f=\overline{\bigcup_{n>0}f^n(\Omega_f)}
\]
If $P_f$ is a finite set, we call $f$ a post-critically
finite branched covering. If $P_{f}$ is infinite but the accumulation set $P_{f}'$ is finite,
we call $f$ a geometrically finite. In this case, $P_{f}'$ consists of a finitely many of periodic orbits.

Suppose $f: \mathbb{S}^2\to \mathbb{S}^2$ is a branched covering. Define the signature
$\nu_f: \mathbb{S}^2\rightarrow \mathbb{N}\cup\{\infty\}$ as
\[
\nu_f(x)=
\begin{cases}
1,& x\notin P_f;\\
lcm\{ deg_y f^n\;|\;\; \forall n>0,\; f^n(y)=x\}, & otherwise.
\end{cases}
\]
The orbifold associated to $f$ is $\mathcal{O}_f=(\mathbb{S}^2,\nu_f)$
and the Euler characteristic of $\mathcal{O}_f$ is defined by
\[
\chi(\mathcal{O}_f)=2-\sum_{x\in\mathbb{S}^2}\left(1-\frac{1}{\nu_f(x)}\right).
\]
It is well known that $\chi(\mathcal{O}_f)\leq 0$ for any
post-critically finite branched covering $f$. The orbifold
$\mathcal{O}_f$ is called hyperbolic if $\chi(\mathcal{O}_f)<0$ and
parabolic if $\chi(\mathcal{O}_f)=0$.

\medskip
\begin{defi}[Combinatorial equivalence]~\label{ce}
Suppose $f$ and $g$ are post-critically finite branched coverings.
We say that they are combinatorially equivalent if there exist a pair of homeomorphisms $\phi$ and $\psi$ of
the two-sphere $\mathbb{S}^{2}$ such that
\begin{itemize}
\item[(1)] $\psi$ is isotopic to $\phi$ rel $P_f$ and
\item[(2)] $\phi\circ f=g\circ \psi$.
\end{itemize}
\end{defi}

Now let us turn to sub-hyperbolic semi-rational branched coverings defined in~\cite{CJ}.

\medskip
\begin{defi}~\label{shsr}
Let $f: \overline{\mathbb{C}}\rightarrow \overline{\mathbb{C}}$ be a
geometrically finite branched covering of degree $d\geq 2$. We say $f$ is a
sub-hyperbolic semi-rational branched covering if for any $a\in
P_f'$ of period $p\geq 1$, there is an open neighborhood $U$ of $a$,
such that $f$ is holomorphic in $U$, and
\begin{enumerate}
\item if $deg_af^p=1$, then
 \[
 f^p(z)=a+\lambda(z-a)+o(|z-a|)
 \]
for $z\in U$ and $0<|\lambda|<1$ is some constant and
\item if $deg_af^p>1$, then
 \[
 f^p(z)=a+\alpha(z-a)^k+o(|z-a|^k)
 \]
for some $z\in U$ and $\alpha\neq 0$ is some constant.
\end{enumerate}
\end{defi}

Corresponding to the combinatorial equivalence in the post-critical finite case,
the following CLH-equivalence (combinatorial and locally holomorphical equivalence)
was naturally introduced in~\cite{CJ}.

\medskip
\begin{defi}[CLH-equivalence]
Suppose $f$ and $g$ are two sub-hyperbolic semi-rational branched coverings.
We say that they are CLH-equivalent if there exists a pair of homeomorphisms $\phi$ and $\psi$ of $\overline{\mathbb{C}}$
such that
\begin{itemize}
\item[(1)] $\psi$ is isotopic to $\phi$ rel $P_f$,
\item[(2)] $\phi\circ f=g\circ \psi$, and
\item[(3)] $\phi|_{U}=\psi|_{U}$ is holomorphic on some open set $U\supset P_f'$.
\end{itemize}
\end{defi}

We have the following useful lemma proved in~\cite{ZJ} for a sub-hyperbolic semi-rational branched covering $f$.

\medskip
\begin{lem}[Shielding ring lemma]
There is a finite collection $\{D_i\}$ of open disks and a finite collection of open annuli $\{A_i\}$ such that
\begin{itemize}
\item[i)] $a_i\in D_i$,
\item[ii)] every $\partial D_i$ is a real analytic curve,
\item[iii)] $\overline{D_i}\cap\overline{D_j}=\emptyset$ for $i\neq j$,
\item[iv)] for each $i$, $A_i$ is an annulus attaching $D_i$ from outside such that $\overline{A_i}\cap P_f=\emptyset$,
\item[v)] $f$ is holomorphic on $\overline{D_i}\cup A_i$, and
\item[vi)] every $f(\overline{D_i}\cup A_i)$ is contained in some $D_j$.
\end{itemize}
\end{lem}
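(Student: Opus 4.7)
The plan is to build the disks and annuli cycle-by-cycle in $P_f'$ using linearizing coordinates. Fix one periodic cycle $\{a_0,\ldots,a_{p-1}\}\subset P_f'$ with $f(a_k)=a_{k+1\bmod p}$. By Definition~\ref{shsr}, $f$, and therefore $f^p$, is holomorphic on a neighborhood of each $a_k$, and $f^p$ at $a_k$ is either attracting ($0<|\lambda|<1$) or super-attracting (local degree $d=\deg_{a_k}f^p\geq 2$). Koenigs' (respectively B\"ottcher's) theorem supplies a local biholomorphism $\psi_k\colon U_k\to V_k$ with $\psi_k(a_k)=0$ conjugating $f^p$ to $w\mapsto\lambda w$ (respectively $w\mapsto w^d$).

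I will take $D_k:=\psi_k^{-1}(B(0,r_k))$ and $E_k:=\psi_k^{-1}(B(0,r_k'))$ for radii $0<r_k<r_k'$ to be chosen, and set $A_k:=E_k\setminus\overline{D_k}$, so that $\overline{D_k}\cup A_k=E_k$. Conditions (i)--(iii) and (v) are then immediate from the construction for small radii: $a_k\in D_k$; $\partial D_k=\psi_k^{-1}(\{|w|=r_k\})$ is real-analytic because $\psi_k$ is holomorphic; the $\overline{D_k}$ are pairwise disjoint (within and across cycles) once the $r_k$ are small; and $f$ is holomorphic on $\overline{E_k}\subset U_k$.

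The main point is to arrange (vi) cyclically. Let $g_k:=\psi_{k+1}\circ f\circ\psi_k^{-1}$, a holomorphic germ at $0$ fixing $0$ with local degree $d_k=\deg_{a_k}f$; observe that the composition $g_{p-1}\circ\cdots\circ g_0$ equals the Koenigs/B\"ottcher normal form of $f^p$ at $a_0$. Taking $j=k+1\bmod p$ in (vi) amounts to the cyclic system $g_k(\overline{B(0,r_k')})\subset B(0,r_{k+1})$. In the attracting case, $|g_k(w)|\leq(|c_k|+o(1))|w|$ with $\prod_k|c_k|=|\lambda|<1$, and one closes the cycle by choosing, for example, $r_k$ proportional to $(1+\epsilon)^k\prod_{j<k}|c_j|$ with $\epsilon>0$ and $r_k'/r_k-1$ sufficiently small. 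In the super-attracting case, iterating $|g_k(w)|\leq C_k|w|^{d_k}$ reduces cyclic consistency to a single inequality of the form $r_0^{d-1}<\mathrm{const}$, which holds for $r_0$ small enough.

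Finally, condition (iv) is obtained by a further refinement of the radii. Only finitely many critical orbits accumulate at the given cycle, and in $\psi_k$-coordinates each tail is a geometric (attracting) or iterated $d$-th-power (super-attracting) sequence converging to $0$; hence $\{|\psi_k(x)|:x\in P_f\cap U_k\}$ is countable with $0$ as its only accumulation point, so within the range of radii admitted by the previous paragraph we may further require $[r_k,r_k']$ to miss this countable set, yielding $\overline{A_k}\cap P_f=\emptyset$. The main obstacle is the cyclic consistency just described: in the attracting case individual local multipliers $|c_k|$ can exceed $1$, and one must exploit the global contraction $|\lambda|<1$ to close the cycle; once that is handled, the super-attracting case and the avoidance of $P_f$ are straightforward.
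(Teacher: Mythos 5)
The paper does not contain a proof of this lemma: it is stated as ``proved in~\cite{ZJ}'' (Zhang--Jiang) and quoted without argument, so there is no in-paper proof to compare your approach against. Evaluated on its own terms, your proof is correct and is the natural one.

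A few remarks on the argument you give. Using Koenigs (attracting) or B\"ottcher (super-attracting) coordinates for $f^p$ at each point of a cycle and taking $D_k=\psi_k^{-1}(B(0,r_k))$, $E_k=\psi_k^{-1}(B(0,r_k'))$, $A_k=E_k\setminus\overline{D_k}$ handles (i)--(iii) and (v) immediately. You correctly identify the essential subtlety for (vi): the individual derivatives $c_k=g_k'(0)$ of the transition maps $g_k=\psi_{k+1}\circ f\circ\psi_k^{-1}$ can have modulus greater than $1$, and the cyclic inequalities $g_k(\overline{B(0,r_k')})\subset B(0,r_{k+1})$ only close up because the full product $\prod_k|c_k|=|\lambda|<1$; your ansatz $r_k\propto(1+\epsilon)^k\prod_{j<k}|c_j|$ with $\epsilon<|\lambda|^{-1/p}-1$, $r_k'/r_k<1+\epsilon$, and an overall scale small enough to suppress the higher-order terms does this. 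The super-attracting case is indeed easier since $r_0^{\prod d_k}/r_0\to0$. For (iv), the key observation is that $\Omega_f$ is finite, so $P_f\cap U_k$ is a finite union of orbit tails accumulating only at $a_k$; hence $\{|\psi_k(x)|:x\in P_f\cap U_k\}$ is discrete away from $0$. One word of caution: you want this avoidance to be compatible with the cyclic constraints, which it is, because the cyclic constraints determine an open set of admissible $r_k$, from which one first deletes the countable excluded set to choose $r_k$ and then takes $r_k'$ slightly larger but below the next excluded radius. That step is implicit in your last paragraph and worth making explicit. Aside from that, the argument is complete.
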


Thus we define
\[
D=\cup D_i \quad \hbox{and} \quad  P_1=P_f\backslash D
\]
and define
\begin{equation}~\label{set}
Q=P_1\cup \overline{D}\quad \hbox{and}\quad  X=\partial Q=P_1\cup \partial D.
\end{equation}

Now we suppose $f$ is either a post-critically finite or sub-hyperbolic semi-rational branched covering.
If $f$ is post-critically finite, we let $E=P_{f}$ and if $f$ is sub-hyperbolic semi-rational,
we let $E=Q$ as defined in (\ref{set}). In both of the cases, assume $0, 1, \infty \in E$,
the Teichm\"uller space $T_f$ for $f$ is the Teichm\"uller space modeled on $(\overline{\mathbb{C}}\backslash E, \partial E)$.
There is a natural induced map $\sigma_{f}: T_{f}\to T_{f}$ as follows. A point $\tau$ can be thought as the equivalent classes $[\mu]_{E}$ of
conformal structures $\mu$ on $\overline{\mathbb{C}}$ such that $\mu|E\equiv 0$. Two conformal structures $\mu$ and $\nu$ are said to be equivalent, denote as $\mu\sim_{E}\nu$,  if $(w^{\nu})^{-1}\circ w^{\mu}$ is isotopic to the identity rel $E$, where $w^{\mu}$ and $w^{\nu}$ are normalized (i.e., fixing $0, 1,\infty$)
quasiconformal homeomorphisms with Beltrami coefficients $\mu$ and $\nu$. For any $\tau=[\mu]\in T_{f}$, let $f^{*}\mu$ be the pull-back conformal structure
of $\mu$ by $f$. We take the proof of the following lemma from~\cite{J,ChJ1}.

\vspace*{5pt}
\begin{lem}~\label{pullback}
If $\mu\sim_{E} \nu$, then $f^{*}\mu\sim_{E} f^{*}\nu$.
\end{lem}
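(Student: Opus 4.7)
The plan is to lift the given isotopy through the branched covering $f$ and then check, by a chain-rule computation on Beltrami coefficients, that the endpoint of the lifted isotopy coincides with $(w^{f^{*}\nu})^{-1}\circ w^{f^{*}\mu}$. I will work uniformly in both settings by using that $P_{f}\subset E$ and $f(E)\subset E$ (the latter is immediate when $E=P_{f}$; in the sub-hyperbolic semi-rational case it follows from the shielding ring lemma, because each $f(\overline{D_{i}})$ lies in some $D_{j}\subset D$ and $f(P_{1})\subset P_{f}\subset Q$). These inclusions give $E\subset f^{-1}(E)$ and make $f\colon\overline{\mathbb{C}}\setminus f^{-1}(E)\to\overline{\mathbb{C}}\setminus E$ an unbranched covering, so any isotopy of $\overline{\mathbb{C}}$ rel $E$ will admit a unique lift through $f$ once its value at one time is fixed.

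From the hypothesis $\mu\sim_{E}\nu$, I would pick an isotopy $\phi_{t}\colon\overline{\mathbb{C}}\to\overline{\mathbb{C}}$ with $\phi_{0}=\mathrm{id}$, $\phi_{1}=(w^{\nu})^{-1}\circ w^{\mu}$, and $\phi_{t}|_{E}=\mathrm{id}|_{E}$, and then lift it to $\tilde{\phi}_{t}$ with $\tilde{\phi}_{0}=\mathrm{id}$ and $f\circ\tilde{\phi}_{t}=\phi_{t}\circ f$. For any $x\in f^{-1}(E)$ the curve $t\mapsto\tilde{\phi}_{t}(x)$ is forced into the finite fiber $f^{-1}(f(x))$ (since $\phi_{t}$ fixes $E$); it starts at $x$ and is continuous, hence constant. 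Therefore $\tilde{\phi}_{t}$ fixes $f^{-1}(E)\supset E$ pointwise; in particular it fixes $0,1,\infty$.

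Next I would compute the Beltrami coefficient of $w^{f^{*}\nu}\circ\tilde{\phi}_{1}$ by pulling back through each factor. Using functoriality of pullback and $f\circ\tilde{\phi}_{1}=\phi_{1}\circ f$,
\[
\mathrm{Bel}(w^{f^{*}\nu}\circ\tilde{\phi}_{1})=\tilde{\phi}_{1}^{*}(f^{*}\nu)=(f\circ\tilde{\phi}_{1})^{*}\nu=(\phi_{1}\circ f)^{*}\nu=f^{*}(\phi_{1}^{*}\nu)=f^{*}\mu,
\]
the final equality using that $w^{\nu}\circ\phi_{1}=w^{\mu}$ has Beltrami $\mu$, so $\phi_{1}^{*}\nu=\mu$. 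Thus $w^{f^{*}\nu}\circ\tilde{\phi}_{1}$ and $w^{f^{*}\mu}$ are normalized quasiconformal self-homeomorphisms of $\overline{\mathbb{C}}$ with identical Beltrami coefficient; they differ by a M\"obius transformation that fixes $0,1,\infty$, hence by the identity. This gives $(w^{f^{*}\nu})^{-1}\circ w^{f^{*}\mu}=\tilde{\phi}_{1}$, and the family $\tilde{\phi}_{t}$ is then the required isotopy rel $E$ from this map to the identity, proving $f^{*}\mu\sim_{E} f^{*}\nu$.

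The main obstacle I anticipate is the lifting step. One must confirm that the isotopy $\phi_{t}$ really does lift across the branch points of $f$, and that the lift fixes $E$ pointwise rather than only $f^{-1}(E)$ setwise; both issues are resolved by the inclusions $P_{f}\subset E\subset f^{-1}(E)$ and the finiteness of the fibers of $f$, as sketched above. Once the lift is in hand, the remainder is a formal Beltrami computation combined with the uniqueness of normalized quasiconformal representatives.
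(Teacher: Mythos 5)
Your proposal is correct and follows essentially the same route as the paper: both lift the isotopy through the covering $f\colon\overline{\mathbb{C}}\setminus f^{-1}(E)\to\overline{\mathbb{C}}\setminus E$ (using $f(\Omega_f)\subseteq E$), observe the lift fixes $f^{-1}(E)\supseteq E$ pointwise, and identify its endpoint with the comparison map between $w^{f^{*}\mu}$ and $w^{f^{*}\nu}$ via a Beltrami computation and uniqueness of normalized quasiconformal maps. The only cosmetic differences are that you compose in the opposite order (immaterial by symmetry of $\sim_E$) and verify the Beltrami identity by functoriality of pullback, where the paper instead inserts the holomorphic factor $g$ from $w^{\mu}\circ f=g\circ w^{f^{*}\mu}$; the paper also explicitly notes that the lifted endpoint is quasiconformal, a small point your computation uses implicitly.
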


\begin{proof}
From the assumption, $\mu\sim_{E} \nu$, we can find a continuous map $H(t, z): [0,1]\times \overline{\mathbb{C}}\to \overline{\mathbb{C}}$ such that
\begin{itemize}
\item[1)] $H(0, z)=z$ for all $z\in \overline{\mathbb{C}}$;
\item[2)] $H(1, z)=(w^{\mu})^{-1} \circ w^{\nu}(z)$ for all $z\in \overline{\mathbb{C}}$; and
\item[3)] $H(t,z)=z$ for all $z\in E$ and all $0\leq t\leq 1$.
\end{itemize}

Since $f(\Omega_{f})\subseteq E$, we have that $\Omega_{f}\subseteq f^{-1}(E)$.
This implies that $f:\overline{\mathbb{C}}\setminus f^{-1}(E)\to \overline{\mathbb{C}}\setminus E$ is a covering map.
The homotopy $H(t,z): [0,1]\times (\overline{\mathbb{C}}\setminus E) \to\overline{\mathbb{C}} \setminus E$ rel $\partial E$ can be lift to a
homotopy $\widetilde{H}(t,z): [0,1]\times (\overline{\mathbb{C}}\setminus f^{-1}(E)) \to \overline{\mathbb{C}}\setminus f^{-1}(E)$ rel $\partial f^{-1}(E)$ such that
$$
H(t, f(z)) = f(\widetilde{H}(t, z)), \quad \forall z\in \overline{\mathbb{C}}\setminus f^{-1}(E), \;\; 0\leq t\leq 1
$$
and
$$
\widetilde{H} (t, z)=z, \quad \forall z\in \partial f^{-1}(E),\;\; 0\leq t\leq 1.
$$
Define $\widetilde{H}(t, z)=z$ for all $z\in f^{-1}(E)$ and $0\leq t\leq 1$. Then the new defined map,
which we still denote as $\widetilde{H}$, is a continuous map
$\widetilde{H} (t, z): [0,1]\times \overline{\mathbb{C}}\to \overline{\mathbb{C}}$ such that
\begin{itemize}
\item[a)] $\widetilde{H}(z, 0)=z$ for all $z\in \overline{\mathbb{C}}$ and
\item[b)] $\widetilde{H}(z,t)=z$ for all $z\in E$ and all $0\leq t\leq 1$.
\end{itemize}
Therefore, it is a homotopy from the identity to $\widetilde{H}_{1}(z)=\widetilde{H}(1, z)$.

Let $H_{1}(z)=H(1, z)$.
Since $H_{1}\circ f=f\circ \widetilde{H}_{1}$ and $H_{1}=(w^{\mu})^{-1} \circ w^{\nu}$ is quasiconformal, $\widetilde{H}_{1}$ is quasiconformal.

Now by using two commuting equations,
$$
(w^{\mu})^{-1} \circ w^{\nu}(z)\circ f=f\circ \widetilde{H}_{1}\quad \hbox{and}\quad g\circ w^{f^{*}\mu} = w^{\mu}\circ f,
$$
where $g$ is holomorphic,
we have that
$$
g\circ w^{f^{*}\mu}\circ \widetilde{H}_{1}=w^{\mu}\circ f\circ \widetilde{H}_{1} = w^{\mu}\circ (w^{\mu})^{-1} \circ w^{\nu}(z)\circ f=w^{\nu}(z)\circ f.
$$
Since $g$ is holomorphic,
$$
\mu_{w^{f^{*}\mu}\circ \widetilde{H}_{1}} =\mu_{g\circ w^{f^{*}\mu}\circ \widetilde{H}_{1}} =\mu_{w^{\nu}(z)\circ f}=f^{*}\nu.
$$
Since both $w^{f^{*}\mu}\circ \widetilde{H}_{1}$ and $w^{f^{*}\nu}$ fix $0, 1, \infty$, we get
$$
w^{f^{*}\mu}\circ \widetilde{H}_{1}= w^{f^{*}\nu}.
$$
In other words,
$$
\widetilde{H}_{1}=(w^{f^{*}\mu})^{-1}\circ w^{f^{*}\nu}.
$$
Thus $\widetilde{H} (t, z): [0,1]\times \overline{\mathbb{C}}\to \overline{\mathbb{C}}$ is a homotopy from the identity to $(w^{f^{*}\mu})^{-1}\circ w^{f^{*}\nu}$ rel
$f^{-1}(E)$. But $E\subseteq f^{-1}(E)$, the last statement implies that $f^{*}\mu\sim_{E} f^{*}\nu$. This completes the proof.
\end{proof}

Thus we can define a map $\sigma_{f}(\tau)=[f^{*}\mu]_{E}$ for any $\tau=[\mu]_{E}$ from $T_{f}$ into itself. It is a holomorphic map.
Each element $\tau=[\mu]_{E}\in T_{f}$ determines a complex structure on $(\overline{\mathbb{C}}\backslash E, \partial E)$.
For a non-peripheral curve in $(\overline{\mathbb{C}}\backslash E, \partial E)$,
let $l_\tau(\gamma)$ denote the hyperbolic length of the unique geodesic homotopic to $\gamma$
on the marked Riemann surface $(\overline{\mathbb{C}}\backslash E, \partial E)$ with the complex structure $\tau=[\mu]_{E}\in T_f$.
Given an element $\tau_{0}=[\mu_{0}]_{E}\in T_{f}$, we have a sequence $\tau_{n}=\sigma_{f}^{n} (\tau_{0})=[\mu_{n}]_{E}$, $n=0, 1, 2, \cdots$.

\medskip
\begin{defi}~\label{canTh1}
The set
$$
\Gamma_{c} =\{ \gamma \;|\; l_{\tau_{n}}(\gamma) \to 0, \;\; n\to \infty\}
$$
is called the canonical Thurston obstruction for $f$.
\end{defi}

Pilgrim proved that
\medskip
\begin{thm}[\cite{Pi1}]~\label{pthm}
A post-critically finite branched covering $f$ with hyperbolic orbifold is not combinatorially equivalent to a rational map if and only if
$\Gamma_{c}\neq\emptyset$.
\end{thm}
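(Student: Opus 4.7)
The plan is to prove the two implications separately, tackling the easy direction first.

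Suppose $f$ is combinatorially equivalent to a rational map $g$. By Thurston's rigidity theorem, whose proof is recorded in Douady--Hubbard~\cite{DH}, the pullback operator $\sigma_f:T_f\to T_f$ has a fixed point $\tau_{\ast}$ and, for every starting point $\tau_0\in T_f$, the orbit $\tau_n=\sigma_f^n(\tau_0)$ converges to $\tau_{\ast}$. Hyperbolic length of any given non-peripheral simple closed curve $\gamma$ is a continuous function on $T_f$, so $l_{\tau_n}(\gamma)\to l_{\tau_{\ast}}(\gamma)>0$, which shows $\Gamma_{c}=\emptyset$.

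For the converse I would argue by contrapositive. Suppose every non-peripheral curve $\gamma$ satisfies $\liminf_n l_{\tau_n}(\gamma)>0$, so that $\Gamma_c=\emptyset$. Mumford's compactness criterion for the moduli space of the punctured sphere then places the projection of $\{\tau_n\}$ in a compact subset of $T_f/\mathrm{Mod}(\overline{\mathbb{C}},P_f)$. Combined with the standard weak-contraction estimates for $\sigma_f$ due to Royden--Wolpert, a subsequence argument produces a periodic point of $\sigma_f$, and by Thurston's theorem such a point forces $f$ to be combinatorially equivalent to a rational map, contradicting the hypothesis. Hence $\Gamma_{c}\neq\emptyset$.

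The main obstacle is proving that the set $\Gamma_{c}$ is a genuine multicurve independent of the base point $\tau_0$, rather than merely a collection of curves whose lengths happen to shrink along subsequences. The two ingredients I would rely on are the collar lemma and the behaviour of annuli under the covering $f$: a short geodesic in $\tau_n$ bounds an annulus of large modulus, and essential preimages of such an annulus under $f$ inherit comparably large modulus in $\tau_{n+1}$. Once a curve becomes sufficiently short it therefore stays short under forward iteration, the family $\Gamma_c$ is stable under essential $f$-preimages, and comparing the growth of the total modulus of the associated annular neighbourhoods with the action of the Thurston transition matrix $A(\Gamma_c)$ forces the leading eigenvalue to be at least $1$, so $\Gamma_c$ is a bona fide Thurston obstruction. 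Independence of $\Gamma_c$ from $\tau_0$ follows from the fact that the Teichm\"uller distance is non-increasing under $\sigma_f$, so any two orbits eventually degenerate in the same asymptotic directions. Putting these pieces together establishes the equivalence.
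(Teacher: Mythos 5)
This theorem is not proved in the paper at all: it is quoted directly from Pilgrim's paper~\cite{Pi1}, so there is no proof here against which to compare yours. I will therefore assess your sketch on its own terms, against what Pilgrim actually does.

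Your easy direction is fine: combinatorial equivalence to a rational map plus hyperbolic orbifold gives, via Douady--Hubbard, convergence of the orbit $\{\tau_n\}$ to the unique fixed point $\tau_{\ast}$, so every length $l_{\tau_n}(\gamma)$ has a positive limit and $\Gamma_c=\emptyset$.

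The converse as you have written it has a real gap, and it is precisely the point you try to defer to your final paragraph. The definition of $\Gamma_c$ only says $l_{\tau_n}(\gamma)\not\to 0$ for every curve $\gamma$ when $\Gamma_c=\emptyset$; this is $\limsup_n l_{\tau_n}(\gamma)>0$, not $\liminf_n l_{\tau_n}(\gamma)>0$, and certainly not a lower bound uniform in $\gamma$. Mumford compactness needs a uniform systole bound: the orbit could in principle escape every compact subset of moduli space by having \emph{different} curves become short at different times, none individually tending to zero. To rule this out you need the dichotomy that Pilgrim establishes and that this paper records as Lemma~\ref{bigconst}: lengths along the orbit either converge to $0$ (membership in $\Gamma_c$) or stay bounded below by a constant depending only on $\tau_0$. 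That dichotomy comes from the collar estimate combined with the observation that short geodesics have moduli which cannot collapse under one pullback; it is the linchpin, not a secondary issue about well-definedness. Once you state and prove that dichotomy, the Mumford step goes through, and the rest of your sketch (weak contraction, intermediate moduli space, fixed point of $\sigma_f$, strict contraction forcing any periodic point to be fixed) matches the standard Douady--Hubbard/Pilgrim line. Your final paragraph touches the right ingredients --- collars, preimage moduli, stability of the short set, the eigenvalue estimate $\lambda(\Gamma_c)\ge 1$, and base-point independence via bounded Teichm\"uller distance between orbits --- but it would need to be promoted to a lemma used \emph{before} the compactness step, not after.
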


Furthermore, in~\cite{ChJ}, we have proved that

\medskip
\begin{thm}[\cite{ChJ}]~\label{cjthm}
A sub-hyperbolic semi-rational branched covering $f$ is not CLH-equivalent to a rational map if and only if
$\Gamma_c\neq\emptyset$.
\end{thm}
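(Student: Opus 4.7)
The plan is to establish the biconditional by following Pilgrim's strategy~\cite{Pi1} from the post-critically finite setting, but adapted to the infinite-dimensional Teichm\"uller space $T_f$ arising from removing open disks $D_i$ around the parabolic periodic cycles. The backward implication, $\Gamma_c \neq \emptyset \Rightarrow f$ is not CLH-equivalent to a rational map, is the short side. If $f$ were CLH-equivalent to a geometrically finite rational map $g$, then by the realization results of~\cite{CJS,ZJ} the pullback operator $\sigma_f$ would have a fixed point $\tau^*\in T_f$ and the orbit $\tau_n = \sigma_f^n(\tau_0)$ would converge to $\tau^*$ in the Teichm\"uller metric. Hyperbolic length being continuous in the Teichm\"uller topology, we would get $l_{\tau_n}(\gamma) \to l_{\tau^*}(\gamma) > 0$ for every non-peripheral simple closed curve, contradicting $\Gamma_c \neq \emptyset$.

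For the forward implication, I would use~\cite{CJS,ZJ} to restate the hypothesis: $f$ is not CLH-equivalent to a rational map precisely when $f$ admits a Thurston obstruction, so the task reduces to showing that the existence of any obstruction $\Gamma$ with leading Thurston eigenvalue $\lambda(\Gamma)\geq 1$ forces some curve into $\Gamma_c$. Mirroring Pilgrim's proof, one first establishes the fundamental extremal-length (or hyperbolic-length, via Maskit's comparison) inequality
\[
l_{\tau_{n+1}}(\gamma_i) \;\leq\; \sum_{j} a_{ij}\, l_{\tau_n}(\gamma_j) + o(1),
\]
where $A(f,\Gamma) = (a_{ij})$ is the Thurston matrix, by tracking the essential components of $f^{-1}(\gamma_i)$ that are isotopic to members of $\Gamma$. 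A Perron--Frobenius argument applied to the left leading eigenvector of $A$ then exhibits a positive combination of lengths that decays geometrically, so at least one $\gamma_j$ has $l_{\tau_n}(\gamma_j) \to 0$. Finally, one verifies that $\Gamma_c$ so obtained is a multicurve which is $f$-stable and itself an obstruction, using standard pullback arguments together with the finiteness of isotopy classes of simple closed curves in a fixed surface of finite type whose extremal length stays bounded.

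The main obstacle is precisely the issue flagged in the introduction: because the $D_i$ are topological disks rather than marked points, the surface $\overline{\mathbb{C}}\setminus E$ has boundary, so $T_f$ is infinite-dimensional and the compactness arguments invoked in the finite-dimensional case (underlying Minsky's and Haissinsky's theorems) are unavailable. The key to overcoming this is the Shielding Ring Lemma: since $f$ is holomorphic on each $\overline{D_i}\cup A_i$ and sends it into some $D_j$, the pullback operator $\sigma_f$ fixes the conformal structure on a neighborhood of each boundary component, so the infinite-dimensional ``boundary coordinates'' of $T_f$ are invariant under $\sigma_f$ and contribute trivially to the length dynamics of any non-peripheral curve in $\overline{\mathbb{C}}\setminus Q$. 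Making this decoupling rigorous, and using it to reduce all the relevant length estimates to the finite-dimensional Teichm\"uller theory of the punctured sphere where Pilgrim's Perron--Frobenius argument applies verbatim, is the technical heart of the proof.
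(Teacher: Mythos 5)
The first thing to note is that the paper does not prove this statement: Theorem~\ref{cjthm} is quoted from~\cite{ChJ}, and the machinery the paper develops in Section~5 (Lemma~\ref{zj}, Propositions~\ref{samebig},~\ref{cp},~\ref{relationship}, Theorem~\ref{cov}) is the analogue of that proof for the decomposed pieces. Measured against that approach, your forward implication rests on the wrong mechanism. The pullback of annuli does not give
\[
l_{\tau_{n+1}}(\gamma_i)\le \sum_j a_{ij}\,l_{\tau_n}(\gamma_j)+o(1);
\]
the Gr\"otzsch superadditivity of moduli gives, in terms of reciprocal lengths, essentially $1/l_{\tau_{n+1}}(\gamma_i)\ge \sum_j a_{ij}/l_{\tau_n}(\gamma_j)-C$. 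Even after this correction, a Perron--Frobenius argument with $\lambda(A_\Gamma)\ge 1$ only shows that curves of $\Gamma$ which start very short stay short; it cannot force any length to tend to $0$ (consider $\lambda(A_\Gamma)=1$), so it does not produce a curve of $\Gamma_c$. The actual proof of this direction, both in~\cite{Pi1} and in~\cite{ChJ}, is the contrapositive: if $\Gamma_c=\emptyset$ then the orbit $\tau_n$ has uniformly bounded geometry, and bounded geometry forces $\sigma_f$ to have a fixed point --- via Mumford compactness in a finite-dimensional moduli space together with the intermediate-moduli-space covering argument of Theorem~\ref{cov} and a strict contraction estimate --- whence $f$ is CLH-equivalent to a rational map. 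Your proposal never engages with this realization step, which is where the theorem actually lives.

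The second gap is your treatment of the infinite-dimensionality. It is not true that $\sigma_f$ ``fixes the boundary coordinates'' of $T_f$ so that they ``contribute trivially''; $\sigma_f$ acts nontrivially on the full Teichm\"uller space modeled on $(\overline{\mathbb{C}}\setminus Q,\partial Q)$. What the shielding rings buy is that one may choose Beltrami representatives vanishing on $Q\cup\bigcup_m A_m$, and then the hyperbolic lengths of short non-peripheral curves on the bordered surface $\overline{\mathbb{C}}\setminus Q$ and on the punctured sphere $\overline{\mathbb{C}}\setminus E$ (each disk replaced by the two marked points $p_0^m,p_*^m$) are comparable --- this is precisely Lemma~\ref{zj} and Propositions~\ref{samebig} and~\ref{cp} --- and the contraction on the fibers of the forgetful map must be controlled separately. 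Nothing here applies ``verbatim.'' Finally, in the backward direction, the step ``a fixed point of $\sigma_f$ exists, hence $\tau_n\to\tau^*$'' is not automatic in an infinite-dimensional $T_f$, since the usual uniform-contraction-on-bounded-sets argument uses compactness of the unit sphere of the cotangent space; this convergence is itself part of the content of~\cite{ZJ} and~\cite{CJS} and should be cited as such, or replaced by the observation (also from~\cite{ChJ}) that a nonempty $\Gamma_c$ is a Thurston obstruction, which by~\cite{CJS},~\cite{ZJ} already precludes CLH-equivalence to a rational map.
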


A simple closed curve $\gamma\subset \mathbb{S}^2\backslash E$ is
called non-peripheral if each component of $\mathbb{S}^2\backslash
\gamma$ contains at least two points of $E$. A multicurve $\Gamma$
on $\mathbb{S}^2\backslash E$ is a set of disjoint, non-homotopic,
non-peripheral simple closed curves on $\mathbb{S}^2\backslash E$.
A multicurve $\Gamma$ is called $f$-stable (or stable briefly) if for any $\gamma\in\Gamma$, all the non-peripheral
components of $f^{-1}(\gamma)$ are homotopic to elements of
$\Gamma$. For a given $f$, we can define Thurston linear
transformation $f_\Gamma: \mathbb{R}\rightarrow \mathbb{R}$ as
follow: Let $\gamma_{i,j,\alpha}$ be the components of
$f^{-1}(\gamma_j)$ homotopic to $\gamma_i$ in
$\mathbb{S}^2\backslash P_f$. Define
\[
f_\Gamma(\gamma_j)=\sum_{i,\alpha}\frac{1}{d_{i,j,\alpha}}\gamma_i,
\]
where $d_{i,j,\alpha}=deg(f: \gamma_{i,j,\alpha}\rightarrow
\gamma_j)$. We can write $f_\Gamma(\Gamma)=A_\Gamma\Gamma$ where
$A_\Gamma$ (or denoted by $A_{\Gamma,f}$) is a $n\times n$
non-negative matrix if $\Gamma=\{\gamma_1,\cdots,\gamma_n\}$. Let
$\lambda(f,\Gamma)\geq 0$ (or $\lambda(\Gamma)\geq 0$ briefly) be the spectral
radius of $A_\Gamma$. For a
stable multicurve $\Gamma$, it is easy to see
$(f^n)_\Gamma=(f_\Gamma)^n$.

\medskip
\begin{defi}~\label{THOb}
A stable multicurve $\Gamma$ is called a Thurston obstruction if $\lambda (\Gamma)\geq 1$.
\end{defi}

It is known that if $\Gamma_{c}\not=\emptyset$, then it is a Thurston obstruction as proved in~\cite{Pi1} for the post-critically finite case
and in~\cite{ChJ} for the sub-hyperbolic semi-rational case. Furthermore, we have also the following from these two papers.

\medskip
\begin{lem}~\label{bigconst}
If $\Gamma_{c}\neq \emptyset$, for any $\tau_{0}\in T_{f}$, there is a constant $L=L(\tau_0)>0$ such that
for any non-peripheral simple closed curve $\gamma\notin\Gamma_c$,
$l_{\tau_n}(\gamma)\geq L>0$ for all $n>0$.
\end{lem}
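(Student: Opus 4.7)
The plan is to combine the uniform collar lemma on hyperbolic surfaces with a dynamical analysis of which multicurves can remain short under iteration of $\sigma_f$.

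First, I would invoke the collar (Margulis) lemma on the marked hyperbolic surface $(\overline{\mathbb{C}}\setminus E,\partial E)$: there exists a universal constant $L_0>0$, depending only on its topological type, such that at every $\tau\in T_f$ the non-peripheral simple closed curves of hyperbolic length less than $L_0$ are pairwise disjoint and pairwise non-homotopic, hence form a multicurve $\Gamma_n^{sh}$ at time $n$ whose cardinality is bounded by the topological complexity of $\overline{\mathbb{C}}\setminus E$. In particular, at each fixed $n$ the set of non-peripheral curves with $l_{\tau_n}(\gamma)<L_0$ is finite, and only finitely many distinct multicurves appear as $n$ varies (up to homotopy).

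Second, I would prove the stabilization statement: $\Gamma_n^{sh}\subseteq \Gamma_c$ for all sufficiently large $n$. The inclusion $\Gamma_c\subseteq \Gamma_n^{sh}$ for large $n$ is immediate from the definition of $\Gamma_c$. For the reverse inclusion, suppose for contradiction that some $\gamma\notin \Gamma_c$ lies in $\Gamma_{n_k}^{sh}$ along a subsequence $n_k\to\infty$. Using the pullback length inequality of Thurston (cf.\ \cite{Pi1,ChJ}), which controls $l_{\tau_{n+1}}(\gamma')$ for any component $\gamma'\subseteq f^{-1}(\gamma)$ in terms of $l_{\tau_n}(\gamma)$ through the local degree, the smallness of $\gamma$ propagates forward along its dynamical orbit in the multicurve graph. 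Because only finitely many short multicurves are possible, the sequence $(\Gamma_n^{sh})$ is eventually periodic, and one extracts an $f$-stable multicurve $\Gamma^*$ whose elements have length tending to $0$ along the full sequence $\tau_n$. By the very definition of $\Gamma_c$, $\Gamma^*\subseteq \Gamma_c$, forcing $\gamma\in\Gamma_c$, a contradiction.

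Finally, fix $N$ with $\Gamma_n^{sh}\subseteq \Gamma_c$ for all $n\geq N$; then for $n\geq N$ and $\gamma\notin\Gamma_c$ we have $l_{\tau_n}(\gamma)\geq L_0$. For each of the finitely many remaining times $n\in\{1,\ldots,N\}$, the collar lemma says only finitely many non-peripheral $\gamma$ satisfy $l_{\tau_n}(\gamma)<L_0$, so there is a positive minimum length $L_n>0$ among those $\gamma\notin\Gamma_c$ (take $L_n=L_0$ if the set is empty). Setting $L=\min\{L_0,L_1,\ldots,L_N\}$ yields the desired uniform bound. The main obstacle is the stabilization step: the pullback length inequality is only effective in the thin part of Teichm\"uller space, and one must leverage that $\Gamma_c$ is itself a Thurston obstruction with spectral radius $\geq 1$ (proved in \cite{Pi1,ChJ}) to exclude any transient ``short'' multicurve outside $\Gamma_c$; the adaptation to the sub-hyperbolic semi-rational setting, where $T_f$ is infinite-dimensional, follows the scheme of \cite{ChJ}.
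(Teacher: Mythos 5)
The paper itself does not prove Lemma~\ref{bigconst}; it is imported from~\cite{Pi1} (post-critically finite case) and~\cite{ChJ} (sub-hyperbolic semi-rational case), so there is no in-paper proof to compare against. Judged on its own merits, your proposal follows the right general scheme (collar lemma plus pullback propagation of shortness plus maximality of $\Gamma_c$), but it contains two genuine gaps that make it incomplete.

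First, the assertion that ``only finitely many distinct multicurves appear as $n$ varies (up to homotopy)'' does not follow from the collar/Margulis lemma. That lemma gives, at each \emph{fixed} $n$, a bound on the \emph{cardinality} of $\Gamma_n^{sh}$ and forces its members to be pairwise disjoint and non-homotopic; it says nothing about which multicurve is realized, and there are infinitely many homotopy classes of non-peripheral simple closed curves on $\overline{\mathbb{C}}\setminus E$. Establishing that the family $\{\Gamma_n^{sh}\}_n$ takes only finitely many values requires extra dynamical input, typically a bounded-step argument using that $\sigma_f$ is weakly contracting (so $d(\tau_n,\tau_{n+1})\leq d(\tau_0,\tau_1)$, forcing lengths to change by a uniformly bounded factor) together with some compactness, and in the present setting one also has to cope with the fact that $T_f$ is infinite-dimensional. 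Since you then invoke this finiteness in the stabilization step (``Because only finitely many short multicurves are possible, the sequence $(\Gamma_n^{sh})$ is eventually periodic''), the argument as written is circular: the finiteness is both an unproved claim and a load-bearing premise.

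Second, the crux of the stabilization step is hand-waved. Passing from ``$\gamma$ is short along a subsequence $n_k$'' to ``$\gamma$ has length tending to $0$ along the full sequence'' (which is what membership in $\Gamma_c$ demands) is exactly the nontrivial content of the lemma, and your sentence ``one extracts an $f$-stable multicurve $\Gamma^*$ whose elements have length tending to $0$ along the full sequence'' does not say how. This is precisely where one must use that $\Gamma_c$ is stable, full, and simple (so that no proper invariant sub- or super-multicurve with $\lambda\geq 1$ exists) and carefully track which preimage components of the short $\gamma$ are again short and where they sit relative to $\Gamma_c$. The final step of your write-up (taking a minimum over finitely many initial times once stabilization holds) is fine, but it rests on the two unproved claims above.
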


Moreover, if $\Gamma_c\neq \emptyset$, then it is also a simple obstruction in the meaning that no permutation of the
curves in $\Gamma_c$ such that the matrix $A_{\Gamma_{c}}$ be written in the block form
$$
\left(
\begin{matrix}
M_{11} & 0\cr
M_{21} & M_{22}
\end{matrix}
\right)
$$
Thus $\Gamma_{c}$ is stable and full, that is, $f^{-1} (\Gamma_{c}) =\Gamma_{c}$.

In the following sections, we will focus on the class of all sub-hyperbolic semi-rational branched coverings such that
$\Gamma_c\neq \emptyset$, in other words, such that they are not CLH-equivalent to rational maps.
However, we will prove that after ``decomposition" and ``extension",
parts of these maps can be still combinatorially equivalent or CLH-equivalent to rational maps under one condition that the extended map has hyperbolic orbifold.
We will divide our idea into three sections, decomposition, extension, and equivalence.

\section{Decomposition}
Suppose $f$ is a sub-hyperbolic semi-rational branched covering and
$$
\Gamma_c=\{\gamma_1,\cdots,\gamma_n\}\not=\emptyset
$$
is its canonical Thurston obstruction.
Then $f$ is not CLH-equivalent to a rational map. Let $Q$ be the set defined in (\ref{set}).
Suppose $A_{0,i}\ (i=1,\cdots,n)$ are a collection of disjoint annuli
whose core curves are $\gamma_i\ (i=1,\cdots,n)$ respectively.
Set
\[
{\mathcal A}_0=\bigcup_{i=1}^{n} A_{0,i}.
\]
Let
$$
{\mathcal A}_1 =\bigcup_{i=1}^{m} A_{1, i}
$$
be the union of preimage of elements of ${\mathcal A}_0$
such that every component of ${\mathcal A}_1$
is homotopic to some component in ${\mathcal A}_0$ rel $Q$.
By the same method as Pilgrim used in~\cite{Pi2}, we get

\medskip
\begin{pro}~\label{homt}
There exists a homeomorphism $h: (\overline{\mathbb{C}},Q)\rightarrow (\overline{\mathbb{C}},Q)$ isotopic to id rel $Q$
such that $\widetilde{f}=f\circ h$ satisfied the followings:
\begin{itemize}
\item[(1)] every curve $\gamma_i\in \Gamma_{c}$ is a core curve of some annulus $A_{0,i}\in {\mathcal A}_0$;
\item[(2)] every $A_{1,k}\in {\mathcal A}_{1}$ is a component of the preimage of some $A_{0,j}\in {\mathcal A}_{0}$
and homotopic to some $A_{0,i}\in {\mathcal A}_0$, denote by $A_{1, ji, \alpha}$;
\item[(3)] for each given $A_{i}\in {\mathcal A}_{0}$, the union $A_{1, i,i}=\cup_{j,\alpha} A_{1, ji, \alpha}$
of all components of ${\mathcal A}_1$ homotopic to $\gamma_i$ is contained inside $A_{0,i}$;
\item[(4)] two outmost annuli from $A_{1,i, i}$ share their outer boundary curves with $A_{0,i}$; and
\item[(5)] restricted to a boundary curve $\chi$ of $A_{0,i}$, the map $f: \chi\rightarrow f(\chi)$, which is a boundary curve of $A_{0,j}$,
is given by $z\mapsto z^d: \mathbb{S}^1\rightarrow \mathbb{S}^1$ for some $d$ up to a homeomorphism conjugation.
\end{itemize}
\end{pro}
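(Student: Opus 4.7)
The plan is to build $h$ in three stages---choose the annuli $\mathcal{A}_0$, nest the essential preimage annuli $\mathcal{A}_1$ inside them by an ambient isotopy rel $Q$, and finally normalize the boundary action of $f$---using crucially that $\Gamma_c$ is full, $f^{-1}(\Gamma_c)=\Gamma_c$, as recorded in the paragraph preceding the proposition.

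First I would choose each $A_{0,i}$ to be an open annular neighborhood of $\gamma_i$ with real-analytic boundary curves, the $A_{0,i}$ pairwise disjoint and all contained in $\overline{\mathbb{C}}\setminus Q$. Since $f(\Omega_f)\subset P_f\subset Q$, the map $f$ has no critical value inside $\mathcal{A}_0$, so every component of $f^{-1}(A_{0,j})$ is an annulus mapping to $A_{0,j}$ by a covering of finite degree. Classify these components as peripheral (the core bounds a disk in $\overline{\mathbb{C}}\setminus Q$) or essential; by fullness of $\Gamma_c$, every essential core is homotopic rel $Q$ to exactly one $\gamma_i$. Taking the union of the essential components produces $\mathcal{A}_1$, and relabelling the component of $f^{-1}(A_{0,j})$ whose core is homotopic to $\gamma_i$ as $A_{1,ji,\alpha}$ yields items (1) and (2) by construction.

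Next I would realize the nesting in (3) and (4) by an ambient isotopy rel $Q$. The components of $\mathcal{A}_1$ are pairwise disjoint, and the ones homotopic to $\gamma_i$ form a finite disjoint family of mutually homotopic annuli lying in the homotopy class of $\gamma_i$ in $\overline{\mathbb{C}}\setminus Q$. A standard Epstein--Baer type argument applied rel $Q$ then produces an ambient isotopy of $\overline{\mathbb{C}}$ fixing $Q$ pointwise that simultaneously pushes the $i$-th sub-family into $A_{0,i}$ as a concentric nested collection for every $i$; following this by a collar isotopy that identifies the two outer boundary circles of the outermost annuli with $\partial A_{0,i}$ gives (4).

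Finally, for (5), each boundary circle $\chi$ of $A_{0,i}$ maps to a boundary circle of some $A_{0,j}$ by an orientation-preserving finite covering of circles, and any such covering is conjugate to the power map $z\mapsto z^d$ on $\mathbb{S}^1$ by a pair of circle homeomorphisms. The conjugating homeomorphism on the domain side extends into a thin collar of $\chi$ inside $A_{0,i}$ and can be absorbed into a further isotopy supported away from $\mathcal{A}_1$ and from $Q$. Composing all these isotopies gives the final $h$, and setting $\widetilde{f}=f\circ h$ produces the five conclusions. The main obstacle I expect is the bookkeeping needed to carry out the nesting, the outer-boundary alignment, and the boundary-power normalization simultaneously without disturbing $Q$ or the already-placed essential components; one must choose the real-analytic boundaries of the $A_{0,i}$ and the supports of the successive collar isotopies with care, and this is exactly the content of Pilgrim's implementation in~\cite{Pi2} that we follow.
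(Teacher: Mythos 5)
Your proposal is correct and follows essentially the same route as the paper, which offers no independent argument for this proposition beyond the remark that it is obtained ``by the same method as Pilgrim used in~\cite{Pi2}''; your three-stage construction (choosing $\mathcal{A}_0$, nesting the essential preimage annuli by an ambient isotopy rel $Q$, then normalizing the boundary coverings to power maps) is precisely that method. The only cosmetic point is that classifying the essential preimage components as homotopic to curves of $\Gamma_c$ uses stability of $\Gamma_c$, while fullness is what guarantees each $A_{1,i,i}$ is nonempty so that item (4) is non-vacuous; both properties are recorded in the paper just before the proposition.
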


The set of annuli ${\mathcal A}_{0}$ satisfy (1)-(5) in Proposition~\ref{homt} is called a standard form.
Without loss of generality, we always assume $f$ itself has a standard form with respect to its canonical Thurston obstruction
$\Gamma_c$.

\medskip
\begin{defi}
We call each component $A_{0,i}$ of ${\mathcal A}_0$ a thin part and each component of $\overline{\mathbb{C}}\backslash {\mathcal A}_0$
a thick part.
\end{defi}

Let
$$
\mathcal{B}_{0}=\{P_{i}^{0}\}_{i=1}^{n_{0}}
$$
be the collection of all thick parts.
Pull-back $\mathcal{B}_{0}$ by $f^{k}$, we have
\[
\mathcal{B}_k=f^{-k} (\mathcal{B}_{0}) =\{ P_i^k\}_{i=1}^{n_{k}}
\]
with
\[
\bigcup_{i=1}^{n_0}\{f^{-k}(P_i^0)\}= \bigcup_{i=1}^{n_{k}} P_{i}^{k}.
\]
Then each element of $\mathcal{B}_k$ belongs to one and only one of the following four classes:
\begin{enumerate}
\item Disk component $\mathcal{D}$ if it is a topological disk and $\mathcal{D}\cap P_f=\emptyset$.
\item Punctured disk component $\mathcal{P}$ if it is a topological disk and $\sharp(\mathcal{P}\cap P_f)=1$.
\item Annulus component $\mathcal{A}$ if it is an annulus and $\mathcal{A}\cap P_f=\emptyset$.
\item Complex component $\mathcal{C}$ if it is not in (1), (2), and (3).
\end{enumerate}
Since all elements of $\Gamma_c$ are non-peripheral and non-homotopic each other,
all thick parts $P_0^0,\cdots,P_{n_0}^{0}$ are complex components.

For each thick part $P_i^0\in \mathcal{B}_0$ and each $k\geq 1$,
remember that ${\mathcal A}_0$ is in the standard form,
there exists an unique component of $\mathcal{B}_k$, denote by $P_i^k$, such that
\begin{itemize}
\item[i)] each component of $\partial P_i^k$ is either peripheral or some
component of $\partial P_i^0$ and
\item[ii)] each component of $\partial P_i^0$ is some component of $\partial P_i^k$.
\end{itemize}
Therefore, for each $P_{i}^{k}$, $f(P_i^k)$ is also a complex component and
if $f(P_i^k)=P_j^{k-1}$, then $f(P_i^l)=P_j^{l-1}$ for any $l\geq 1$.
And furthermore, consider the set of integers $I=\{1, \cdots, n_{0}\}$.
For each $i\in I$, let $k\geq 1$ be any integer, we have a unique $j\in I$ such that $f(P_{i}^{k})\subset P_{j}^{k-1}$.
This defines a self map $\tau: I\to I; \tau (i)=j$. Since $\Gamma_{c}$ is stable and full, we have that

\medskip
\begin{pro}
Each number $i\in I$ is a preperiodic point under iteration of $\tau$ and at least one number in $I$ is a periodic point of $\tau$.
\end{pro}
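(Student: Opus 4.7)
The plan is to exploit the fact that $\tau\colon I\to I$ is a self-map of a finite set, so that the statement reduces to a pure pigeonhole argument once well-definedness of $\tau$ is in hand. First I would record that $\tau$ is well-defined, which is essentially the content of the paragraph preceding the proposition: for each $i\in I$, the identity $f(P_i^k)=P_j^{k-1}$ at one level $k\geq 1$ forces $f(P_i^l)=P_j^{l-1}$ at every level $l\geq 1$, so the assignment $\tau(i)=j$ does not depend on the choice of $k$. This uses crucially that $\Gamma_c$ is stable and full, so that the nest of complex components $\{P_i^k\}_{k\geq 0}$ obtained from a fixed $i\in I$ is compatible under the pullbacks by $f$.

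With $\tau\colon I\to I$ in hand and $|I|=n_0<\infty$, fix any $i\in I$ and consider the forward orbit $i,\tau(i),\tau^2(i),\ldots,\tau^{n_0}(i)$. This list has $n_0+1$ entries living in a set of size $n_0$, so by pigeonhole there exist integers $0\leq a<b\leq n_0$ with $\tau^a(i)=\tau^b(i)$. Setting $j=\tau^a(i)$ and $p=b-a\geq 1$, we obtain $\tau^p(j)=j$, so $j$ is a periodic point of $\tau$ with period dividing $p$, and the orbit of $i$ enters the cycle through $j$ after $a$ steps. This shows $i$ is preperiodic, and simultaneously exhibits a periodic point $j\in I$ of $\tau$.

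The ``hard part'' here is not the argument itself but making sure that $\tau$ really is well-defined on $I$; once that is granted, the conclusion is a one-line combinatorial consequence of the finiteness of $I$. All of the genuine work — isotoping $f$ into standard form, controlling the preimages of the collar annuli $\mathcal{A}_0$, and showing that each complex component $P_i^k$ has the correct boundary structure and correct image under $f$ — has already been done in the setup leading up to the proposition. I do not anticipate any additional obstacle beyond writing out the two observations above.
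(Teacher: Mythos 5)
Your proof is correct and is exactly the argument the paper leaves implicit: once $\tau\colon I\to I$ is well-defined (which the paper establishes via the standard form of $\mathcal{A}_0$ and the stability/fullness of $\Gamma_c$), the proposition is the elementary fact that every forward orbit of a self-map of a finite set must eventually cycle, and the pigeonhole bound $\tau^a(i)=\tau^b(i)$ with $0\leq a<b\leq n_0$ delivers both the preperiodicity of $i$ and a periodic point $j=\tau^a(i)$. The paper gives no explicit proof, treating it as immediate from the preceding discussion; your write-up simply makes that step explicit, and correctly identifies well-definedness of $\tau$ as the only nontrivial ingredient.
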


\vspace*{5pt}
\begin{defi}~\label{preperiodic}
If $i\in I$ is preperiodic, then we say that the corresponding thick part $P_{i}^{0}$ is preperiodic
and if $i\in I$ is periodic, we say that the corresponding thick part $P_{i}^{0}$ is periodic.
\end{defi}

Thus we get the following decomposition theorem.

\medskip
\begin{thm}[Decomposition]~\label{decop}
Suppose $f$ is a sub-hyperbolic semi-rational branched covering not
CLH-equivalent to a rational map. Then ${\mathbb S}^2$ can be
decomposed into thin parts and thick parts according to the canonical
Thurston obstruction $\Gamma_{c}$. Furthermore, each thick part is
eventually periodic and at least one thick part is periodic.
\end{thm}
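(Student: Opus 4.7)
The plan is to deduce the decomposition from the already-established standard form of $f$ together with a finiteness argument on the set of thick parts. Since $f$ is not CLH-equivalent to a rational map, Theorem~\ref{cjthm} gives $\Gamma_c\neq\emptyset$, and Proposition~\ref{homt} allows me to replace $f$ (up to isotopy rel $Q$) by a map in standard form with respect to $\Gamma_c$. This supplies the annular neighborhoods ${\mathcal A}_0=\bigcup_i A_{0,i}$ of the obstruction curves as the thin parts, and the finitely many components $\{P_i^0\}_{i\in I}$ of $\overline{\mathbb C}\setminus{\mathcal A}_0$ as the thick parts. Each $P_i^0$ is a complex component, since every $\gamma_i\in\Gamma_c$ is non-peripheral and no two curves in $\Gamma_c$ are homotopic, so both sides of each $A_{0,i}$ carry enough marked points of $E$. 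This already yields the decomposition asserted in the theorem.

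Next I would establish that $f$ induces a well-defined self-map $\tau:I\to I$ on the finite index set. For each $i\in I$ and each $k\geq 1$ the strategy is to identify inside $f^{-k}({\mathcal A}_0)$ the unique ``copy'' $P_i^k\in\mathcal{B}_k$ whose boundary consists of peripheral curves together with the components of $\partial P_i^0$. Items (3)--(5) of Proposition~\ref{homt} --- which pin the outmost components of ${\mathcal A}_1$ to the outer boundary of each $A_{0,i}$ and prescribe the boundary dynamics as $z\mapsto z^d$ --- are exactly what force existence and uniqueness of this $P_i^k$, and the property propagates inductively to higher $k$. The fullness of $\Gamma_c$ (namely $f^{-1}(\Gamma_c)=\Gamma_c$ up to homotopy, recorded at the end of Section~2) prevents any essential non-peripheral curve from appearing inside a thick part. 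Together these show that $f(P_i^k)$ is a single complex component of $\mathcal{B}_{k-1}$ depending only on $i$, so setting $\tau(i)=j$ when $f(P_i^1)=P_j^0$ gives $f(P_i^k)=P_{\tau(i)}^{k-1}$ for every $k\geq 1$.

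With $\tau$ in hand the remaining claims are purely combinatorial: any self-map of a finite set sends every point into an eventual cycle, and at least one cycle must exist. Declaring $P_i^0$ to be eventually periodic (resp.\ periodic) exactly when $i$ is such under $\tau$, as in Definition~\ref{preperiodic}, completes the proof.

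The main obstacle I anticipate is the middle paragraph --- verifying that the lifted thick parts $P_i^k$ are well-defined for all $k$ and assemble into a single-valued dynamics. The subtlety is topological: one must exclude both a splitting of a single $P_i^0$ into several genuinely distinct preimage components and a merging of distinct thick parts in some preimage. The standard form handles the former and the stability-plus-fullness of $\Gamma_c$ handles the latter; once this is carefully checked, the finite-set argument closes the proof.
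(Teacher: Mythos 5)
Your proposal is correct and follows essentially the same route as the paper: invoke Theorem~\ref{cjthm} to get $\Gamma_c\neq\emptyset$, pass to the standard form of Proposition~\ref{homt} to define the thin and thick parts, use the non-peripheral and pairwise non-homotopic nature of the curves in $\Gamma_c$ to see every thick part is a complex component, build the induced self-map $\tau$ on the finite index set via the distinguished components $P_i^k$, and conclude by the pigeonhole fact that every point of a finite set under a self-map is eventually periodic with at least one cycle. Your middle paragraph in fact spells out the well-definedness of $P_i^k$ more explicitly than the paper does, which states it as a consequence of the standard form and the stability and fullness of $\Gamma_c$ without further elaboration.
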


\section{Extension}
Suppose $f: \mathbb{S}^2\rightarrow \mathbb{S}^2$ is a branched
covering and $P$ is a point set in $\mathbb{S}^2$. We define
a sub-hyperbolic semi-rational {\em type} branched covering (with respect to $P$).

\begin{defi}
We call a branched covering $f$ sub-hyperbolic semi-rational
type if
\begin{itemize}
\item[(1)] $f$ satisfies Definition~\ref{shsr} by replacing
$P_f$ to $P$;
\item[(2)] $\sharp P=\infty$;
\item[(3)] $P_f\subseteq P$;
\item[(4)] $f(P)\subseteq P$.
\end{itemize}
\end{defi}

We also define a post-critically finite {\em type} branched covering (with respect to $P$).

\begin{defi}
We call a branched covering $f$ post-critically finite type
type if
\begin{itemize}
\item[(1)] $\sharp P<\infty$;
\item[(2)] $P_f\subseteq P$;
\item[(3)] $f(P)\subseteq P$.
\end{itemize}
\end{defi}

We say that a sub-hyperbolic semi-rational type or post-critically
finite type branched covering has hyperbolic orbifold if
$\chi(\mathcal{O}_f)<0$.
We would like to note that a sub-hyperbolic semi-rational {\em type} branched covering
may not have a hyperbolic orbifold even $\sharp P=+\infty$

A sub-hyperbolic semi-rational type branched covering can be constructed
from a sub-hyperbolic semi-rational branched covering or from a
post-critically finite branched covering by adding finite or
infinite number points to $P_f$. Similarly, a post-critically finite
type branched covering can be constructed from a post-critically
finite branched covering by adding finite number points to $P_f$.

Suppose $P_{0}^{0}$ is a periodic thick part. Suppose $k>0$ is the period.
Suppose $\gamma_1,\cdots,\gamma_p$ are boundary curves of $P_0^0$ and
$\gamma_1,\cdots,\gamma_p,\beta_1,\cdots,\beta_q$ are boundary
curves of $P_0^k$ where $\beta_j$ ($j=1,\cdots,q$) are peripheral
curves. For any $\beta_j$, it must be a component of
$f^{-k}(\gamma_i)$ for some $\gamma_i$. Denote
\[
P_0^0\backslash P_0^k=\bigcup_{j=1}^q D(\beta_j),\ \
\overline{\mathbb{C}}\backslash P_0^0=\bigcup _{i=1}^p D(\gamma_i).
\]
Let
\[
d_{\beta_j}=deg(f^k: \beta_j (\subset f^{-k}(\gamma_i))\rightarrow \gamma_i),\
\ d_{\gamma_i}=deg(f^k: \gamma_i\rightarrow f^k(\gamma_i)).
\]
Define a new branched covering map by
\begin{equation}~\label{newmap}
\widetilde{f}=\widetilde{f}_{P_0^{0}}=
\begin{cases}
f^k (z),\ \ \ z\in P_0^k\\
\varphi_j\circ z^{d_{\beta_j}}\circ \psi_j,\ \ \ z\in D(\beta_j)\ \ (j=1,\cdots,q)\\
\varphi_i\circ z^{d_{\gamma_i}}\circ \psi_i,\ \ \ z\in D(\gamma_i)\
\ (i=1,\cdots,p)
\end{cases}
\end{equation}
where $\psi_j, \varphi_j^{-1}$ are homeomorphisms from $D(\beta_j)$ and $D(\gamma_i)$ to the unit disk
$\mathbb{D}$, respectively, and $\psi_i, \varphi_i^{-1}$ are homeomorphisms from $D(\gamma_i)$ and $D(f^{k}(\gamma_i))$ to the unit disk
$\mathbb{D}$, respectively, such that $\widetilde{f}$ is continuous.
For each disk $D(\gamma_i)$, we mark a point $z_{i}$.
If $D(\beta_j)$ contains a point, say $z^*$, belonging to $P_f$ and
$f^k(\beta_j)=\gamma_i$, we can select $\varphi_j, \varphi_{i}$ and $\psi_j, \psi_{i}$ such
that $\widetilde{f}(z^*)=z_i$.
Also, if $f^k(\gamma_i)=\gamma_k$, we can select $\varphi_i,\varphi_k,\psi_i,\psi_k$
such that $\widetilde{f}(z_i)=z_k$. Thus we have that
$$
P_{\widetilde{f}}\subseteq P=(P_f\cap P_0^0)\cup (\cup_{j=1}^p \{z_j\}).
$$
and
$$
\widetilde{f}(P_{\widetilde{f}})\subseteq
P_{\widetilde{f}}\quad \hbox{and}\quad \widetilde{f}(P)\subseteq
P.
$$

The above process extends $f^{k}$ for every periodic thick part of period $k\geq 1$ to a new branched covering
$\widetilde{f}: \overline{\mathbb{C}}\rightarrow \overline{\mathbb{C}}$. Then $\widetilde{f}$ is either
a post-critically finite branched covering or a sub-hyperbolic semi-rational branched covering.
Without causing any confusion, we use now $TP$ to denote a periodic thick part $P_{i}^{0}$ of period $k\geq 1$ and $TP'$ to denote
$P_{i}^{k}\subset P_{i}^{0}$. We state the above process into a theorem.

\medskip
\begin{thm}[Extension]~\label{ext}
Suppose $f$ is a sub-hyperbolic semi-rational branched covering such that $\Gamma_{c}\not= \emptyset$.
Suppose $TP$ is a periodic thick part of period $k\geq 1$. Then $TP'$ and $TP$ can be extended to the $2$-sphere
by adding finitely many disks with marked points and the map $f^{k}: TP'\to TP$ can be extended to a branched covering $\tilde{f}$
of the $2$-sphere which is either a post-critically finite type branched covering or a sub-hyperbolic semi-rational
type branched covering.
\end{thm}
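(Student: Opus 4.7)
The construction of $\widetilde f$ is essentially spelled out in equation~(\ref{newmap}); the task is to verify that the choices of the homeomorphisms $\psi_j,\varphi_j,\psi_i,\varphi_i$ and of the marked points $z_i$ can be made coherently so that the resulting map is a well-defined orientation-preserving branched covering of the $2$-sphere, and that it belongs to one of the two stated types.

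First I would fix a concrete model for the attached disks. The boundary $\partial TP=\partial P_0^0$ consists of the curves $\gamma_1,\dots,\gamma_p$, and the boundary $\partial TP'=\partial P_0^k$ consists of these same curves together with the peripheral curves $\beta_1,\dots,\beta_q$ that bound the disks $D(\beta_j)\subset P_0^0\setminus P_0^k$. Filling the $D(\beta_j)$'s returns $TP=P_0^0$, and further filling the external disks $D(\gamma_i)\subset\overline{\mathbb C}\setminus P_0^0$ returns $\overline{\mathbb C}$. So the topological ambient space is already a $2$-sphere; what remains is to extend the map $f^{k}|_{TP'}$ continuously over $\bigcup_j D(\beta_j)\cup\bigcup_i D(\gamma_i)$ as a branched covering.

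Next I would define $\widetilde f$ on each attached disk by the formulas in~(\ref{newmap}) and verify continuity across every $\gamma_i$ and every $\beta_j$. The key input is item~(5) in Proposition~\ref{homt}: in the standard form, the restriction of $f$ (hence of $f^{k}$) to every boundary curve of an annulus in $\mathcal A_0$ is, up to a topological conjugation, the power map $z\mapsto z^d$ with degree equal to $d_{\gamma_i}$ or $d_{\beta_j}$ respectively. Using this, the homeomorphisms $\psi_j,\varphi_j$ (respectively $\psi_i,\varphi_i$) can be chosen so that on $\partial D(\beta_j)=\beta_j$ (respectively on $\partial D(\gamma_i)=\gamma_i$) the map $\varphi_\bullet\circ z^{d_\bullet}\circ \psi_\bullet$ agrees with $f^{k}|_{\beta_j}$ (resp.\ $f^{k}|_{\gamma_i}$). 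Since $\deg(f^k)=\deg(f^k|_{TP'})+\sum_j d_{\beta_j}=\deg(f^k|_{TP'})+\sum_i d_{\gamma_i}$ once each new disk is counted, the glued map $\widetilde f$ is a branched covering of $\overline{\mathbb C}$ whose only new branch points on the attached disks are the marked points $z_i\in D(\gamma_i)$ (of local degree $d_{\gamma_i}$) and, when a point of $P_f$ lies in $D(\beta_j)$, the chosen preimage in $D(\beta_j)$ (of local degree $d_{\beta_j}$).

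Finally I would analyze the post-critical dynamics. By construction, the marks are arranged so that $\widetilde f(z^*)=z_i$ whenever $z^*\in D(\beta_j)\cap P_f$ and $f^{k}(\beta_j)=\gamma_i$, and so that $\widetilde f(z_i)=z_\ell$ whenever $f^{k}(\gamma_i)=\gamma_\ell$. Setting $P=(P_f\cap P_0^0)\cup\{z_1,\dots,z_p\}$, this gives $\Omega_{\widetilde f}\subset P$, $P_{\widetilde f}\subseteq P$ and $\widetilde f(P)\subseteq P$. If $P_f\cap P_0^0$ is finite, $\widetilde f$ is post-critically finite type; otherwise the accumulation set of $P_{\widetilde f}$ is contained in $(P_f'\cap P_0^0)\cup\{z_1,\dots,z_p\}$, which is a finite union of periodic cycles. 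Near the cycles coming from $P_f'$, $\widetilde f$ coincides with $f^{k}$ and therefore inherits the local holomorphic model of Definition~\ref{shsr}; near each cycle in $\{z_1,\dots,z_p\}$ the map is by construction conjugate to $z\mapsto \alpha z^{d_{\gamma_i}}$, which is super-attracting and holomorphic, again matching Definition~\ref{shsr}. Hence in this case $\widetilde f$ is a sub-hyperbolic semi-rational type branched covering.

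The main obstacle is the coherent combinatorial choice in the continuity step: the boundary identifications on the two sides of each $\gamma_i$ (coming from $D(\gamma_i)$ and from $TP$), and on the two sides of each $\beta_j$ (coming from $D(\beta_j)$ and from $TP'$), must be simultaneously compatible with $f^{k}$ and with the prescribed dynamics on the $z_i$'s. This is where item (4) of Proposition~\ref{homt}, placing $A_{1,i,i}$ inside $A_{0,i}$, is essential, since it makes the boundary curves of $P_0^{k}$ that come from $\partial P_0^{0}$ literally coincide with $\gamma_i$ rather than only be isotopic to it. Once the boundary data are arranged as power maps of the correct degree, the remainder of the verification (continuity, covering degree, local branching, and the post-critical set) is routine.
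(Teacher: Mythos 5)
Your proposal is correct and follows essentially the same route as the paper, whose ``proof'' of Theorem~\ref{ext} is precisely the construction in~(\ref{newmap}) together with the choice of marked points and the verification that $P_{\widetilde f}\subseteq P$ and $\widetilde f(P)\subseteq P$. The only caveat is your degree identity $\deg(f^k)=\deg(f^k|_{TP'})+\sum_j d_{\beta_j}$, which is not needed and not quite right as written; the continuity of the gluing via item (5) of Proposition~\ref{homt} is all that is required, exactly as you argue elsewhere.
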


Furthermore, if we start with a quasiregular branched covering $f$, then we can also extend to a quasiregular $\tilde{f}$.

\section{Realization}
Based on our decomposition theorem (Theorem~\ref{decop})
and our extension theorem (Theorem~\ref{ext}), we now state our
main theorem.

\medskip
\begin{thm}[Main Theorem]~\label{mainth}
Every $\widetilde{f}$ is either a post-critically finite type branched covering
or a sub-hyperbolic semi-rational type branched covering.
If the orbifold associate to $\widetilde{f}$ is hyperbolic,
in the post-critically finite type case,  $\widetilde{f}$
is combinatorially equivalent to a rational map; in the sub-hyperbolic semi-rational type case, $\widetilde{f}$ is
CLH-equivalent to a rational map. Moreover, in the both cases, the realized rational map
is unique up to conjugation of an automorphism of the Riemann sphere.
\end{thm}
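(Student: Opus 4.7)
The plan is to dichotomize on the type of $\widetilde f$ and, in each case, prove directly that $\widetilde f$ carries no Thurston obstruction; the existence and uniqueness of the realized rational map then follows from the rigidity theorems already collected in Section~2. When $\widetilde f$ is of post-critically finite type with hyperbolic orbifold I would appeal to Selinger's theorem~\cite{Se} (or the alternative argument of Bonnot--Yampolsky~\cite{BY}): by construction $\widetilde f$ is exactly the ``new branched covering on a periodic two-sphere'' obtained from the decomposition along the canonical Thurston obstruction followed by filling of the holes with super-attracting fixed points. Selinger's theorem therefore yields the absence of a Thurston obstruction, and Thurston's realization theorem~\cite{DH} produces the unique combinatorially equivalent post-critically finite rational map.

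The substantive case is when $\widetilde f$ is of sub-hyperbolic semi-rational type with hyperbolic orbifold. I would argue by contradiction: suppose $\widetilde f$ is not CLH-equivalent to any rational map. By Theorem~\ref{cjthm} applied to $\widetilde f$, its canonical Thurston obstruction $\widetilde\Gamma_c$ is non-empty; pick $\widetilde\gamma\in\widetilde\Gamma_c$. Using the identification of the extended sphere with $TP$ together with the attached disks $D(\gamma_i)$ and $D(\beta_j)$, and the fact that each attached disk meets $E_{\widetilde f}$ only at the marked point $z_i$ in its interior, I would homotope $\widetilde\gamma$ into $TP'$. Regarded now as a simple closed curve $\gamma$ in the original sphere, $\gamma$ lies in $TP$ and is non-peripheral in $(\overline{\mathbb C}\setminus E_f,\partial E_f)$ because each boundary curve of $TP$ separates $E_f$ non-trivially in the original sphere; but $\gamma$ is not homotopic rel $E_f$ to any element of $\Gamma_c$, since the elements of $\Gamma_c$ are precisely boundary curves of $TP$ (hence peripheral in the extended sphere), whereas $\widetilde\gamma$ was non-peripheral there by assumption.

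To reach the contradiction I would compare length decay along $\sigma_{\widetilde f}^{\,n}$ on $T_{\widetilde f}$ with length decay along $\sigma_f^{\,nk}$ on $T_f$. The essential observation is that $\widetilde f$ coincides with $f^k$ on $TP'$, and that by the shielding-ring lemma each disk attached in the extension can be equipped with a conformal structure whose $\widetilde f$-pull-backs are uniformly controlled near the super-attracting marked points $z_i$. Starting from an arbitrary base point $\widetilde\tau_0\in T_{\widetilde f}$ and transporting it to a base point $\tau_0\in T_f$ by restricting to $TP$ and extending in any quasiconformal fashion across the remaining thick parts and thin annuli, I would show that $l_{\widetilde\tau_n}(\widetilde\gamma)$ and $l_{\tau_{nk}}(\gamma)$ are comparable up to a multiplicative constant independent of $n$. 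Since $\widetilde\gamma\in\widetilde\Gamma_c$ forces the former to tend to zero while Lemma~\ref{bigconst} applied to $\gamma\notin\Gamma_c$ forces the latter to stay bounded below by some $L>0$, we obtain the sought contradiction. Hence $\widetilde\Gamma_c=\emptyset$, Theorem~\ref{cjthm} produces the CLH-equivalent rational map, and uniqueness up to M\"obius conjugation follows from the corresponding uniqueness statements already recorded in Theorems~\ref{pthm} and~\ref{cjthm}.

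The main difficulty, and the point at which the infinite-dimensional nature of $T_f$ is genuinely felt, is the length comparison between $l_{\widetilde\tau_n}(\widetilde\gamma)$ and $l_{\tau_{nk}}(\gamma)$ in the previous paragraph: $T_f$ is infinite-dimensional because the thin annuli and the non-periodic thick parts contribute a continuous family of moduli that are \emph{a priori} not controlled by the $\widetilde f$-dynamics on $TP'$. I plan to handle this by uniform modulus estimates on fixed annular collars of the boundary curves $\gamma_i$ and $\beta_j$ of $TP'$. The shielding ring around each super-attracting $z_i$ together with the forward invariance of the attached disks under $\widetilde f$ implies that the pulled-back conformal structure on these collars converges, in Teichm\"uller distance restricted to a fixed collar of $\partial TP'$, to a limit depending only on the $\widetilde f$-dynamics; transferring this control back to the $f$-side via the common identity $\widetilde f=f^k$ on $TP'$ then gives the required comparison by a standard application of the Schwarz lemma, and the rest of the argument proceeds as outlined above.
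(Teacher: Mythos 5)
The dichotomy and the appeal to Selinger/Bonnot--Yampolsky for the post-critically finite type case match the paper. Your identification of the key obstacle (the length comparison across the two Teichm\"uller spaces, in the presence of an infinite-dimensional $T_f$) is also correct, as is the observation that the image $\gamma$ of an obstruction curve $\widetilde\gamma$ in the original sphere avoids $\Gamma_c$. However, the proposed resolution of the obstacle is a genuine gap, and it is precisely where the paper does something different and substantially harder.

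You assert a multiplicative comparison $l_{\widetilde\tau_n}(\widetilde\gamma)\asymp l_{\tau_{nk}}(\gamma)$ with a constant independent of $n$, justified only by ``uniform modulus estimates on collars,'' convergence of pulled-back structures near the super-attracting points, and an appeal to the Schwarz lemma. This is not established. The two conformal structures $\mu_{nk}$ (on $\overline{\mathbb C}\setminus Q$) and $\widetilde\mu_n$ (on the extended sphere) agree on $TP'$ only away from the $\widetilde f^{-l}$-preimages of the attached disks, and that agreement set shrinks with $n$ (see Proposition~\ref{relationship}); moreover the curve $\gamma$ lives inside the common piece $TP'$ but the geodesic representatives are taken in two genuinely different ambient surfaces, so a one-sided Schwarz estimate does not give the two-sided bound you would need to transport $l_{\widetilde\tau_n}(\widetilde\gamma)\to 0$ to a decay of $l_{\tau_{nk}}(\gamma)$. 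The paper never proves such a comparison. Instead it introduces five interlocking structures $\tau_n,\widetilde\tau_n,\tau_n',\tau_n'',\tau_n'''$ (Proposition~\ref{relationship}), proves the quantitative length-preservation statements of Propositions~\ref{samebig} and~\ref{cp}, and then argues in two cases: if $\{\tau_n\}$ had a convergent subsequence, a limit $\tau_{i,*}''$ would simultaneously have all lengths bounded below (from Lemma~\ref{bigconst} and Proposition~\ref{samebig}) and $l_{\tau_{i,*}''}(\beta)\to 0$ for the $\widetilde f$-obstruction curve $\beta$ (from Theorem~\ref{cjthm} and Proposition~\ref{cp}), a contradiction; and if no subsequence converges, the lower length bounds give Mumford compactness of the projected orbit in the intermediate moduli space $M'$ (Theorem~\ref{cov}), after which a strict-contraction argument for the induced dynamics on $M'$ produces a fixed point of $\sigma_{\widetilde f}$ in $T_{\widetilde f}$, again contradicting the assumption. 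Your proposal contains no analogue of the intermediate-moduli-space/Mumford-compactness/fixed-point step, and without it (or a bona fide proof of the multiplicative comparison) the argument does not close.
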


To prove the main theorem (Theorem~\ref{mainth}),
we first prove three propositions (Propositions~\ref{samebig},~\ref{cp}, and~\ref{relationship}); each of them has its own interest.

We first quote a theorem from~\cite[Theorem 6.3]{DH}.

\vspace*{5pt}
\begin{thm}~\label{collar}
Let $R$ be a Riemann surface with conformal structure $\tau$. Let $\alpha_{1}, \cdots, \alpha_{n}$ be
disjoint simple closed geodesics of length $l_{1}, \cdots, l_{n}$. Then there exist in $R$ disjoint annuli
$A(\alpha_{i})$ ($i=1, \cdots, n$) which are collars of $\alpha_{i}$ ($i=1, \cdots, n$) such that each modulus
$\mod(A(\alpha_{i}))$ satisfies
$$
\frac{\pi}{2l_{i}}-1\leq \mod(A(\alpha_{i})) \leq \frac{\pi}{2l_{i}} \quad (i=1, \cdots, n).
$$
\end{thm}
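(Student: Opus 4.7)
The plan is to combine the classical hyperbolic collar lemma with an explicit modulus computation for a tubular neighborhood of a simple closed geodesic. For each $i$, I would pass to the universal cover $\pi\colon\mathbb{H}\to R$, pick a lift $\widetilde{\alpha}_i$ of $\alpha_i$, and normalize by a M\"obius transformation so that $\widetilde{\alpha}_i$ is the positive imaginary axis and the primitive deck transformation fixing it setwise is $T_i(z)=e^{l_i}z$. Composing with $\log$ then gives a conformal identification of $\mathbb{H}/\langle T_i\rangle$ with the strip $\{0<\mathrm{Im}\,w<\pi\}$ modulo $w\mapsto w+l_i$, so the full quotient cylinder has modulus $\pi/l_i$ (which equals $2\cdot\pi/(2l_i)$; the theorem's constant reflects the standard Douady--Hubbard normalization of $\mathrm{mod}$).

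The second step is to produce $A(\alpha_i)$ via the Keen--Buser collar lemma, which guarantees that the open tube $T(\alpha_i,w_i)=\{p\in R:\mathrm{dist}(p,\alpha_i)<w_i\}$ embeds as an annulus whenever $\sinh(w_i)\leq 1/\sinh(l_i/2)$, and that such tubes about distinct simple closed geodesics are automatically disjoint. I would take the maximal admissible $w_i$ and set $A(\alpha_i):=T(\alpha_i,w_i)$. A direct hyperbolic distance computation then shows that the lift of $T(\alpha_i,w_i)$ is the Euclidean sector $\{re^{i\theta}:\theta_-(w_i)<\theta<\theta_+(w_i)\}$ with $\theta_\pm(w)=2\arctan(e^{\pm w})$, which in the $\log$-strip becomes a horizontal substrip of height $\theta_+(w_i)-\theta_-(w_i)=4\arctan(e^{w_i})-\pi$. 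Dividing by the period $l_i$ (and halving per the convention above) gives
\[
\mathrm{mod}(A(\alpha_i))=\frac{4\arctan(e^{w_i})-\pi}{2\,l_i}.
\]

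The upper bound $\mathrm{mod}(A(\alpha_i))\leq \pi/(2l_i)$ is then immediate: the right-hand side is monotone increasing in $w_i$ with limit $\pi/(2l_i)$ as $w_i\to\infty$, and in fact any embedded annulus in $R$ with core $\alpha_i$ lifts to a sub-cylinder of $\mathbb{H}/\langle T_i\rangle$, so the bound holds even before the collar-lemma restriction. For the lower bound one substitutes the collar value $\sinh(w_i)=1/\sinh(l_i/2)$ and estimates the gap $\pi/(2l_i)-\mathrm{mod}(A(\alpha_i))$. I expect this uniform estimate to be the main obstacle, and would handle it by splitting into regimes. In the short geodesic regime $l_i\to 0$, I would use $\arctan(e^{w_i})=\tfrac{\pi}{2}-\arctan(e^{-w_i})$ together with $e^{-w_i}\asymp\sinh(l_i/2)\asymp l_i/2$, which after expansion produces a gap that is $O(1)$ (not $O(1/l_i)$); the numerical constant can then be absorbed into the ``$-1$''. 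For $l_i$ bounded away from $0$, both $\pi/(2l_i)$ and $\mathrm{mod}(A(\alpha_i))$ are themselves $O(1)$, so the inequality reduces to a compact, elementary check. Disjointness of the $A(\alpha_i)$ is built into the collar lemma, so no further argument is needed there.
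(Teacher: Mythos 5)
The paper does not prove this statement at all: it is quoted verbatim as \cite[Theorem 6.3]{DH}, so there is no in-paper argument to compare against. Your proposal is the standard proof of that cited result (uniformization of the cyclic cover, the Keen--Buser collar lemma, and an explicit modulus computation for the tube), and the computations you do write down are correct: the lift of the width-$w$ tube is indeed the sector with $\theta_\pm(w)=2\arctan(e^{\pm w})$ (this is the Gudermannian identity $2\arctan(e^{w})-\pi/2=\arcsin(\tanh w)$), the upper bound does follow from lifting any embedded annulus with core $\alpha_i$ into $\mathbb{H}/\langle T_i\rangle$ and monotonicity of modulus, and disjointness is part of the collar lemma.

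Two loose ends are worth tightening. First, the factor of $2$: with the usual normalization ($\mod\{1<|z|<R\}=\tfrac{1}{2\pi}\log R$) the covering cylinder has modulus $\pi/l_i$, not $\pi/(2l_i)$, so to prove the theorem \emph{as stated} you must actually commit to the convention under which the bound reads $\pi/(2l_i)$ and carry it through consistently, rather than remark that the constant ``reflects a normalization''; the rest of the paper uses $\tfrac{\pi}{2}\tfrac{1}{l}$ throughout, so this is not cosmetic. Second, your lower bound is left as an asymptotic sketch (``a gap that is $O(1)$ \dots absorbed into the $-1$''), but an $O(1)$ bound does not suffice: you need the deficit to be at most the explicit constant $1$. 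This can be closed in one line and makes the regime-splitting unnecessary: with $\sinh(w_i)=1/\sinh(l_i/2)$ one gets $e^{w_i}=\coth(l_i/4)$, hence the deficit is
\[
\frac{\pi}{2l_i}-\frac{4\arctan(e^{w_i})-\pi}{2l_i}=\frac{2\arctan(e^{-w_i})}{l_i}=\frac{2\arctan(\tanh(l_i/4))}{l_i}\leq \frac{2\cdot (l_i/4)}{l_i}=\frac12\leq 1,
\]
using $\arctan x\leq x$ and $\tanh x\leq x$. With those two points fixed, your argument is a complete and correct proof of the quoted theorem.
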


\medskip
\begin{pro}~\label{samebig}
Suppose $R_1$ is a Riemann surface, which is the Riemann sphere minus a set $E$ consisting of finite number of points and finite number of disks,
with complex structure $\tau_1=[\mu_{1}]$. Suppose $\gamma_1,\cdots,\gamma_n$ are non-peripheral non-homotopic
simple closed curves on $R_1$ with $l_{\tau_1}(\gamma_i)<\varepsilon$ ($\varepsilon$ sufficiently
small). Let $R_2$ be a Riemann surface with complex structure
$\tau_2=[\mu_{2}]$ obtained from $R_1$ by cutting along $\gamma_i$ and
capping every hole by a puncture disk. If there exists a constant
$K>0$ such that for every non-peripheral simple closed curve
$\beta$ other than $\gamma_1,\cdots,\gamma_n$,
$l_{\tau_1}(\beta)\geq K$, then there exists a constant
$\tilde{K}=\tilde{K} (K,\varepsilon)>0$ such that for every non-peripheral simple
closed curve $\widetilde{\beta}$ of $R_{2}$, $l_{\tau_2}(\widetilde{\beta})\geq \tilde{K}$.
\end{pro}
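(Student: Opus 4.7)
The plan is to reduce the lower bound on hyperbolic lengths in $R_2$ to the given hypothesis on $R_1$. The approach has two parts: a topological lift of $\tilde\beta$ to a curve $\beta$ on $R_1$ satisfying the hypothesis, and a quantitative comparison of the hyperbolic metrics on the common region---outside the short geodesics $\gamma_i$ in $R_1$ and outside the caps $U_j$ in $R_2$---controlled by the Douady--Hubbard collar lemma (Theorem~\ref{collar}).

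First I would lift $\tilde\beta$ to a suitable curve on $R_1$. Since $\tilde\beta$ is non-peripheral in $R_2$, it can be pushed off each capping punctured disk $U_j$: any arc of $\tilde\beta\cap U_j$ separates the new puncture $z_j$ from $\partial U_j$, and non-peripherality of $\tilde\beta$ around $z_j$ lets me homotope such arcs to $\partial U_j$ by disk surgery. The resulting representative lies in $R_2\setminus\bigcup U_j$, which is naturally identified with $R_1\setminus\bigcup\gamma_i$ as a Riemann surface, giving a simple closed curve $\beta$ on $R_1$ disjoint from all $\gamma_i$. A short case analysis then shows $\beta$ falls under the hypothesis: if $\beta$ bounded a disk $D$ in $R_1$, any $\gamma_i\subset D$ would itself be null-homotopic (contradicting non-triviality of $\gamma_i$), so no $\gamma_i$ lies in $D$ and $\tilde\beta$ would bound a disk in $R_2$, contradicting non-peripherality; similarly $\beta$ cannot bound a once-punctured disk around some $p\in E$, because any $\gamma_i$ inside would be trivial or peripheral around $p$; and if $\beta$ were homotopic to some $\gamma_j$, then $\tilde\beta$ would be homotopic to a peripheral loop around $z_j$ in $R_2$. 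Hence $\beta$ is non-peripheral in $R_1$ and not homotopic to any $\gamma_i$, and the hypothesis yields $l_{\tau_1}(\beta)\geq K$.

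The main obstacle is to convert this into a lower bound on $l_{\tau_2}(\tilde\beta)$. I would apply Theorem~\ref{collar} in $R_1$ to produce embedded collars $A(\gamma_i)$ of modulus at least $\pi/(2\varepsilon)-1$, which are large when $\varepsilon$ is small. Outside these collars---equivalently, outside the caps $U_j$ in $R_2$---the two conformal structures coincide on a common Riemann surface $V$, and a standard pinching comparison gives that $\rho_{\tau_1}|_V$ and $\rho_{\tau_2}|_V$ are uniformly comparable, with ratio tending to $1$ as $\varepsilon\to 0$. The $\tau_2$-geodesic representative of $\tilde\beta$ lies essentially inside $V$, since no simple closed geodesic can be contained in a punctured-disk cap and, by the cusp geometry, the geodesic can only make shallow excursions into the caps near $\partial U_j$. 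Viewing this geodesic as a curve in $R_1$ via the identification and invoking the metric comparison, its $\tau_2$-length is bounded below by (essentially) its $\tau_1$-length, which is at least $l_{\tau_1}(\beta)\geq K$; this produces the desired constant $\tilde K=\tilde K(K,\varepsilon)>0$. The principal difficulty lies in making the comparison $\rho_{\tau_1}\sim\rho_{\tau_2}$ precise near the thin parts: one must rule out spurious length contributions from the cap geometry that has no $R_1$-counterpart. The large modulus of $A(\gamma_i)$ provided by Theorem~\ref{collar} forces the relevant portions of the caps to sit in the thick part of $R_2$, and a Margulis-type disjointness argument, together with the pinching estimate, converts this into the explicit quantitative dependence of $\tilde K$ on both $K$ and $\varepsilon$.
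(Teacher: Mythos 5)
Your overall strategy—use the Douady--Hubbard collar lemma, work on the common subsurface $R^* = R_1 \setminus \bigcup\gamma_i \cong R_2 \setminus \bigcup U_j$, and argue that $\widetilde\beta$ determines a curve $\beta$ on $R_1$ with $l_{\tau_1}(\beta)\ge K$—is the same starting point as the paper's. But the proposal has a genuine gap at its core step, and the paper avoids this gap by a different, more explicit, argument.

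The claim that ``$\rho_{\tau_1}|_V$ and $\rho_{\tau_2}|_V$ are uniformly comparable, with ratio tending to $1$ as $\varepsilon\to 0$'' is not justified and is the wrong tool here. Domain monotonicity of the Poincar\'e metric gives $\rho_{R_1}|_V \le \rho_V$ and $\rho_{R_2}|_V \le \rho_V$, but there is no inclusion between $R_1$ and $R_2$ themselves, so there is no a priori inequality between $\rho_{R_1}|_V$ and $\rho_{R_2}|_V$; near $\partial V$ the two ambient geometries are qualitatively different (a thin finite-modulus collar in $R_1$ versus a cusp in $R_2$), and a pointwise ratio bound with constant tending to $1$ would need a real proof, which your sketch does not supply. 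The second and more serious issue is the phrase ``the geodesic can only make shallow excursions into the caps,'' to be handled by a ``Margulis-type disjointness argument.'' This is precisely the hard case, and it cannot be waved away: nothing in the hypotheses prevents the $\tau_2$-geodesic of $\widetilde\beta$, or the core curve $\widetilde\beta^{0,+}$ of half its collar, from penetrating the annular buffer between $\gamma_i$ and the cap. The paper confronts this directly by splitting into two cases. If $\widetilde\beta^{0,+}$ stays away from $\widetilde\gamma_i^-$, the argument is purely conformal: half the collar of $\widetilde\beta$ transports to an annulus in $R_1$ homotopic to $\beta$, and comparing moduli (not metrics) via Theorem~\ref{collar} gives $L\ge \pi K/(2\pi+2K)$. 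If $\widetilde\beta^{0,+}$ does cross $\widetilde\gamma_i^-$, the paper normalizes the cap to $\{|z|<1/2\}$ with the new puncture at $0$, observes that the large modulus of the buffer annulus forces $\widetilde\gamma_i^-$ into $\{|z|<r_0\}$ with $r_0=\tfrac{1}{2}e^{-2K_1}$, and integrates the explicit lower bound $\lambda_{0,1}(z)\ge |\zeta'(z)|/(|\zeta(z)|(4-\log|\zeta(z)|))$ for the Poincar\'e density of $\mathbb{C}\setminus\{0,1\}$ along a segment crossing $\{r_0<|z|<1/2\}$ to produce an absolute constant $C>0$ with $l(\widetilde\beta^{0,+})\ge C$, hence $L\ge \pi C/(4+2C)$. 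Your proposal would need to supply a concrete substitute for both of these steps; as written, it identifies the difficulty but does not resolve it.
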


\medskip
\begin{rem}
In the proposition, the term ''capping a puncture disk'' means that first extend $\mu_{1}$ to the Riemann sphere by defining $\mu_{1}=0$ on $E$.
Every curve $\gamma_{i}$ cuts the Riemann sphere into two disks, one of them is disjoint with other $\gamma_{j}$. Mark a point in this disk and set $\mu_{2}=0$ on this punctured disk. Then define $\mu_{2}=\mu_{1}$ on the rest of $R_{2}$.
The Riemann surfaces $R_1$ and $R_2$ may be different type Riemann surfaces.
\end{rem}

\begin{proof}
First, we use $R^*$ to denote the part of $R_{1}$ and $R_{2}$ by cutting
along $\gamma_i\ (i=1,\cdots,n)$ before we capped all holes by puncture disks.
Then $R_1$ has the same complex structure with $R_2$ on $R^*$.

Let $\alpha_{i}$ be the closed geodesic in $R_{1}$ homotopic to $\gamma_i$.
Since $l_{\tau_1}(\gamma_i)<\varepsilon$, the hyperbolic length $l_{\tau_1}(\alpha_{i})<\varepsilon$.
Suppose $A(\alpha_i)$ ($i=1,\cdots,n$) is the collar of $\alpha_i$ in $R_{1}$ in Theorem~\ref{collar}.
Then $A(\alpha_i)$ ($i=1,\cdots,n$) are pairwise disjoint and homotopic to $\gamma_i$.

Denote the boundary curves of $A(\alpha_i)$ by $\alpha_i^+$ and $\alpha_i^-$ respectively.
Since $\varepsilon$ small, either $\alpha_i\cap\gamma_i=\emptyset$ or $\alpha_i=\gamma_i$.
Without loss of generality, we always assume $\alpha_i^-$ lies in the same side of
$\gamma_{i}$ with respect to $\alpha_i$.
The following three cases can happened:
\begin{itemize}
\item[1.] $\alpha_i=\gamma_i$.
\item[2.] $\alpha_i\cap R^*=\emptyset$.
\item[3.] $\alpha_i\subset R^*$.
\end{itemize}
Then we have that
\begin{itemize}
\item[1)] In the first and the second cases, let $A_{\gamma_i,\alpha_i^{+}}$ be the annulus
with boundary curves $\gamma_i,\alpha_i^{+}$. Let $\gamma_i^{-}$ be the core curve of this annulus.
Define $A_{\gamma_i^-,\gamma_i}$ as the annulus with boundaries $\gamma_i^-,\gamma_i$. This annulus is outside $R^*$ and attaching $R^*$.
%
%
%
\item[2)] In the third case, let $A_{\gamma_i,\alpha_i^{+}}$ be the annulus with boundary curves
$\alpha_i^+,\gamma_{i}$. Let $\gamma_i^-$ be the core curve of this annulus.
Define $A_{\gamma_i^{-},\gamma_i}$ as the annulus with boundary curves $\gamma_i^{-},\gamma_i$.
This annulus is inside $R^{*}$ and attaching $\gamma_i$.
%
%
\end{itemize}
Since the hyperbolic length $l_{\tau_1}(\alpha_{i})<\varepsilon$, we have a constant $K_{1}=K_{1} (\epsilon)>0$ such that
the modulus of $A_{\gamma_i^-,\gamma_i}$ is greater than $K_{1}$.

We assume that we are in the second case and give a detailed proof.
In the first and third cases, the proof is similar.

Let $\widetilde{\gamma}_i^-, \widetilde{\gamma}_i$ be two non-homotopic zero closed curves in the punctured disk of $R_2$ we capped
such that they are homotopic each other in $R_{2}$. Let $A_{\widetilde{\gamma}_i^-,\widetilde{\gamma}_i}$ be the annulus with boundary curves
$\widetilde{\gamma}_i^-, \widetilde{\gamma}_i$. This annulus is a non-homotopic zero annulus in the punctured disk we capped, therefore, it is a
non-homotopic zero annulus in $R_{2}$. Suppose the modulus of $A_{\widetilde{\gamma}_i^-,\widetilde{\gamma}_i}$
equals to the modulus of $A_{\gamma_i^-,\gamma_i}$. Thus the modulus of $A_{\widetilde{\gamma}_i^-,\widetilde{\gamma}_i}$ is greater than $K_{1}$.

Suppose $\widetilde{\beta}$ is an non-peripheral simple closed
geodesic in $R_2$. Suppose the hyperbolic length $l_{\tau_2}(\widetilde{\beta})=L$. Let
$A(\widetilde{\beta})$ be the collar of $\widetilde{\beta}$ in
$R_2$. We have
$$
\frac{\pi}{2}\frac{1}{l_{\tau_2}(\widetilde{\beta})}-1\leq  \mod(A_{\widetilde{\beta}}) \leq \frac{\pi}{2}\frac{1}{l_{\tau_2}(\widetilde{\beta})}.
$$
Suppose the boundary curves of $A(\widetilde{\beta})$ are $\widetilde{\beta}^+$ and
$\widetilde{\beta}^-$, respectively.
The annulus $A(\widetilde{\beta})$ is cut into two annuli by $\widetilde{\beta}$, which we denote as
$A_{\widetilde{\beta}^+,\widetilde{\beta}}, A_{\widetilde{\beta},\widetilde{\beta}^-}$.
Let $\widetilde{\beta}^{0,+}, \widetilde{\beta}^{0,-}$ be the core curves of these two sub-annuli.
When these two curves are viewed in $R_{1}$, we denote
as $\beta^{0,+}, \beta^{0,-}$. Similarly, we use $\beta$ to denote the corresponding curve
$\widetilde{\beta}$ when it is viewed as a curve on $R_1$.

First, we consider the case $\widetilde{\beta}^{0,+}\cap
\widetilde{\gamma}_i^-=\emptyset$ for all $i=1,\cdots,n$. Then $\beta^{0,+}\cap
\gamma_i^-=\emptyset$. Let $\zeta$ be the simple closed geodesic in $R_{1}$ homotopic to $\beta$.
Then the collar $A(\zeta)$ of $\zeta$ containing $A_{\beta^{0,+},\beta^{0,-}}$
and we have that
$$
\mod(A(\zeta))\geq \mod(A_{\beta^{0,+},\beta^{0,-}})=\mod(A_{\widetilde{\beta}^{0,+},\widetilde{\beta}^{0,-}})
=\frac{1}{2} \mod(A(\widetilde{\beta})).
$$
Now we have
$$
K\leq l_{\tau_1}(\beta)\leq \frac{\pi}{2} \frac{1}{\mod(A(\zeta))}\leq \frac{\pi}{\mod(A(\widetilde{\beta}))}
\leq \frac{\pi}{\frac{\pi}{2}\frac{1}{l_{\tau_2}(\widetilde{\beta})}-1}=\frac{2\pi L}{\pi-2L}.
$$
This implies that
$$
L\geq \frac{\pi K}{2\pi+2K}.
$$

Now we are considering the second case that is $\widetilde{\beta}^{0,+}\cap
\widetilde{\gamma}_i^-\not=\emptyset$ for some $i$. Without loss generality, we assume $z_i=0$, $z_j=1$, $z_k=\infty$ for some $j\neq i$ and
$k\neq i$. Furthermore, we can assume the capping punctured disk $D_i$ lies in the disk $|z|<\frac{1}{2}$ and
$\widetilde{\gamma}_i=\{z:|z|=\frac{1}{2}\}$.
By using the Poincar\'e density $\lambda_{0,1}(z)$ of the thrice punctured complex plane
$\mathbb{C}_{0,1}=\overline{\mathbb{C}}\setminus \{0,1, \infty\}$, we can give a lower bound
on the hyperbolic length of $\widetilde{\beta}^{0,+}$ by considering $R_{2}$ as a sub-Riemann surface of the thrice punctured complex plane,
that is
\[
l(\widetilde{\beta}^{0,+})\geq \int^{\frac{1}{2}}_r \lambda_{0,1}(x)dx
\]
where $x$ is real and $0<r<1/2$. Now we use the property of $\lambda_{0,1}(z)$
to estimate $l(\widetilde{\beta}^{0,+})$ (see~\cite{Ah2} for this property).
Since
\[
\lambda_{0,1} (z)\geq \frac{|\zeta'(z)|}{|\zeta(z)|}\frac{1}{4-\log |\zeta(z)|}
\]
where $\zeta(z)=\frac{\sqrt{1-z}-1}{\sqrt{1-z}+1}$ and $\mbox{Re} \sqrt{1-z}>0$,
\[
l(\widetilde{\beta}^{0,+})\geq \int^{\frac{1}{2}}_r \frac{|\zeta'(z)|}{|\zeta(z)|}\frac{1}{4-\log |\zeta(z)|}|dz|
\]
\[
=\log\frac{4-\log(-(\zeta(r)))}{4-\log(-(\zeta(\frac{1}{2})))}.
\]
Since $\mod(A_{\widetilde{\gamma}_i^-,\widetilde{\gamma}_i})>K_1$,
there exists a $0<r_0 <\frac{1}{2}$ such that $r<r_0$ (in fact, we can take $r_0=\frac{1}{2e^{2K_1}}$).

Since
\[
(-\zeta(r))'=\left(\frac{1-\sqrt{1-r}}{1+\sqrt{1-r}}\right)'=\frac{2}{\sqrt{1-r}(1+\sqrt{1-r})^2}>0,
\]
the function $\log\frac{4-\log(-(\zeta(r)))}{4-\log(-(\zeta(\frac{1}{2})))}$ is decreasing with respect to $r$.
So
\[
l(\widetilde{\beta}^{0,+})\geq C=\log\frac{4-\log(-(\zeta(r_{0})))}{4-\log(-(\zeta(\frac{1}{2})))}.
\]
Since $\widetilde{\beta}^{0,+}$ is the core curve of $A_{\widetilde{\beta}^{+},\widetilde{\beta}}$, we have
\[
\mod(A_{\widetilde{\beta}^{+},\widetilde{\beta}})\geq \frac{1}{2} \mod(A_{\widetilde{\beta}})
\geq \frac{1}{2} \Big(\frac{\pi}{2}\frac{1}{L}-1\Big) =\frac{\pi-2L}{4L}.
\]
Then
\[
C\leq l(\widetilde{\beta}^{0,+})\leq \frac{\pi}{2}\frac{1}{\mod(A_{\widetilde{\beta}^{+},\widetilde{\beta}})}\leq \frac{4 L}{\pi-2L}.
\]
Combining the above result, we have
\[
L\geq \frac{\pi C}{4+2C}.
\]

Let
\[
\tilde{K}=\min\Big\{\frac{\pi K}{2\pi+2K}, \frac{\pi C}{4+2C}\Big\}.
\]
Then for any non-peripheral simple closed curve $\widetilde{\beta}$ on $R_{2}$, we have that
$$
l_{\tau_2}(\widetilde{\beta})\geq \tilde{K}.
$$
This completes the proof.
\end{proof}

\medskip
The following is a key step in our proof. Suppose
$$
\{p_n^m\} \quad (n=1,\cdots,\infty;\quad m=1,\cdots, M)
$$
is a sequence of points in $\overline{\mathbb{C}}$ and
$$
p_n^m\rightarrow p_0^m, \quad n\rightarrow \infty.
$$
Assume there exists a finite collection $\{D_m\}_{m=1}^{M}$ of open disks (called holomorphic disks)
and a finite collection of open annuli $\{A_m\}_{m=1}^{M}$ satisfied
\begin{itemize}
\item[(a)] $p_0^m\in D_m$,
\item[(b)] $\overline{D}_m\cap \overline{D}_{m'}=\emptyset,\ (1\leq m\neq m'\leq M)$,
\item[(c)] for each $m$, $A_m$ is an annulus attaching $D_m$ from the outside such that for any
$n\in\mathbb{N},\ m=1,\cdots,M$, $\overline{A}_m\cap \{ \{ p_n^m\}_{n=1}^{\infty}\}_{m=1}^{M} =\emptyset$.
\end{itemize}
Recall
\[
D=\bigcup_{m=1}^M D_m,\ \ P_1=\bigcup_{m=1}^M\bigcup_{n=1}^\infty \{ \{ p_n^m\}_{n=1}^{\infty} \}_{m=1}^{M} \backslash D
\]
\[
Q=P_1\bigcup \overline{D},\ \ X=\partial Q=P_1\bigcup \partial D.
\]
For each holomorphic disk $D_m$, fix a point $p^m_*$ on the boundary $\partial D_m$. Set
\begin{equation}~\label{newset}
E=P_1\bigcup( \bigcup_{m=1}^M\{p_0^m,p_*^m\}).
\end{equation}
We {\sl assume} that $0, 1, \infty\in E$.

For any Beltrami coefficient $\mu$ on $\overline{\mathbb{C}}$ such that $\mu|Q=0$,
the surface $\overline{\mathbb{C}}\backslash Q$ with the complex structure $\mu$ is a hyperbolic Riemann surface, which we denote as
$RQ_{\mu}$. The surface $\overline{\mathbb{C}}\backslash E$ with the complex structure $\mu$ is also a hyperbolic Riemann surface, which we denote as
$RE_{\mu}$. Then $RQ_{\mu}$ is a sub-Riemann surface of $RE_{\mu}$.
For any non-peripheral simple closed curve $\beta$ in $\overline{\mathbb{C}}\backslash Q$, let $l_{\mu,Q}(\beta)$ denote the hyperbolic length of the unique simple closed geodesic in $RQ_{\mu}$ homotopic to $\beta$. For any non-peripheral simple closed curve $\beta$ in $\overline{\mathbb{C}}\backslash E$, let $l_{\mu,E}(\beta)$ denote the hyperbolic length of the unique simple closed geodesic in $RE_{\mu}$ homotopic to $\beta$.
To prove the following proposition, we need two lemmas. The first one can be found in any standard book in quasiconformal mapping theory (for example,~\cite{Ah1}) and the second one can be founded in~\cite{ZJ}. We give a proof of the second lemma in the sake of completeness of this paper.

\medskip
\begin{lem}~\label{mod}
Let $z\in \overline{\mathbb{C}}\backslash\{0,1,\infty\}$ be a point.
Let $H\subset \overline{\mathbb{C}}$ be an annulus which separates $\{0,1\}$ and $\{z,\infty\}$. Then
\[
\mod(H)\leq \frac{1}{2\pi}\log(|z|+1).
\]
\end{lem}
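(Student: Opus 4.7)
The plan is to apply the length-area (extremal length) method. Write $\mod(H)=1/\lambda(\Gamma_H)$, where $\Gamma_H$ is the family of closed curves in $H$ homotopic to the core, equivalently separating its two boundary components. Every such $\gamma$, viewed in $\overline{\mathbb{C}}$, separates $\{0,1\}$ from $\{z,\infty\}$, so in particular $0$ and $\infty$ lie on opposite sides, giving $\gamma$ winding number $\pm 1$ around $0$. The argument principle then yields
\[
\int_{\gamma}\frac{|dw|}{|w|}\;\ge\;\Big|\int_{\gamma}\frac{dw}{w}\Big|\;=\;2\pi .
\]

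The test metric will be $\rho(w)=1/|w|$ supported on $\Omega=\{1\le|w|\le|z|+1\}$, for which
\[
\int_{\Omega}\rho^{2}\,dA\;=\;2\pi\int_{1}^{|z|+1}\frac{dr}{r}\;=\;2\pi\log(|z|+1).
\]
Assuming the length estimate $\int_{\gamma}\rho\,|dw|\ge 2\pi$ survives the restriction to $\Omega$, the length-area inequality gives $\lambda(\Gamma_H)\ge (2\pi)^{2}/(2\pi\log(|z|+1))=2\pi/\log(|z|+1)$, and hence $\mod(H)\le \frac{1}{2\pi}\log(|z|+1)$, as desired.

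The hard part is preserving the length bound after restricting $\rho$ to $\Omega$, since a separating curve $\gamma$ may dip into $\{|w|<1\}$ and those portions contribute nothing to the restricted $\rho$-length even though they can carry part of the total winding. To address this I would modify $\rho$ to $1/\max(|w|,1)$ on $\{|w|\le|z|+1\}$ (bounded near $0$, so only an $O(1)$ extra area on $\{|w|\le 1\}$) and exploit that $\gamma$ must also enclose the point $1$---so it cannot lie entirely in $\{|w|<1\}$---to recover a comparable length bound via a careful decomposition of $\int_{\gamma}d\log w$ across the circle $|w|=1$. An alternative is to first pass to a circularly symmetrized annulus (the modulus only increases under Baernstein-type symmetrization), which reduces to the case of a round annulus where the test metric's length bound is automatic; the resulting bound is $\frac{1}{2\pi}\log(|z|+1)$ in either case.
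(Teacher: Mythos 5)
The paper does not actually prove Lemma~\ref{mod}; it cites a standard reference (for instance \cite{Ah1}), and in the proof of Lemma~\ref{zj} it only uses the consequence that $\mod(H)\le\frac{1}{2\pi}\log|z|+O(1)$, with the implicit constant absorbed into the uniform constant $C$ of (\ref{mod-ine-2}). So there is no ``paper proof'' to match yours against; the question is whether your argument stands on its own.

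Your extremal-length plan is the standard one, and you correctly identify the gap: once $\rho=1/|w|$ is truncated to $\Omega=\{1\le|w|\le|z|+1\}$, a core curve of $H$ that dips into $\{|w|<1\}$ (or wanders out past $|w|=|z|+1$) loses $\rho$-length, and the winding-number estimate $\int_\gamma\rho\,|dw|\ge2\pi$ no longer follows from the argument principle. Neither of your two proposed repairs closes this. The modified metric $1/\max(|w|,1)$ is \emph{pointwise smaller} than $1/|w|$, so it makes the length bound strictly worse rather than better; and it adds $\pi$ to the area, so even if the $2\pi$ length bound were somehow salvaged you would land on $\frac{1}{2\pi}\log(|z|+1)+\frac14$, not the stated inequality. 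Circular symmetrization does increase modulus, but it destroys the constraint that $H$ separates $\{0,1\}$ from $\{z,\infty\}$ --- it does not reduce to a round annulus centred at $0$, its natural endpoint is the Teichm\"uller extremal ring, and that ring itself carries an additive constant.

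In fact the constant-free bound you are trying to prove is not true. Take $z=-R$ with $R$ large. The Teichm\"uller ring $\overline{\mathbb{C}}\setminus\big([0,1]\cup(-\infty,-R]\big)$ separates $\{0,1\}$ from $\{z,\infty\}$, and its modulus is $\tau(R)=2\mu\big(1/\sqrt{R+1}\big)$, which equals $\frac{1}{2\pi}\log\big(16(R+1)\big)-o(1)$ as $R\to\infty$; this exceeds $\frac{1}{2\pi}\log(R+1)=\frac{1}{2\pi}\log(|z|+1)$ by about $\frac{\log 16}{2\pi}\approx0.44$. The correct (and sharp) statement, Teichm\"uller's module theorem, is $\mod(H)\le\frac{1}{2\pi}\log(|z|+1)+\frac{\log16}{2\pi}$. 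For the paper's purposes the extra constant is harmless, but your concluding sentence (``the resulting bound is $\frac{1}{2\pi}\log(|z|+1)$ in either case'') is incorrect, and no choice of test metric will eliminate the additive term --- you should aim for the bound with the constant, which is both what is true and all that Lemma~\ref{zj} needs.
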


\medskip
\begin{lem}[\cite{ZJ}]~\label{zj}
There exists an $\eta>0$ such that for any Beltrami coefficient $\mu$ on $\overline{\mathbb{C}}$
with $\mu=0$ on $Q\cup \cup_{m=1}^{M} A_m$ and any non-peripheral simple closed curve
$\gamma\subset \overline{\mathbb{C}}\backslash E$ with $l_{\mu,E}(\gamma)<\eta$,
we have $\gamma\subset \overline{\mathbb{C}}\backslash Q$
(more precisely, a simple closed curve homotopic to $\gamma$ is contained in $\overline{\mathbb{C}}\backslash Q$).
And for any $\varepsilon>0$, there is a $\delta>0$ such that
\[
l_{\mu,E}(\gamma)>(1-\varepsilon)l_{\mu,Q}(\gamma)
\]
provided that $l_{\mu,E}(\gamma)<\delta$.
\end{lem}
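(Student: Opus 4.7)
The proof falls into two parts. Throughout I would work in the target sphere via the quasiconformal map $w^\mu$ solving the Beltrami equation for $\mu$, normalized to fix $0,1,\infty$. Since $\mu\equiv 0$ on $Q\cup\bigcup_m A_m$, the images $\widetilde{D}_m=w^\mu(D_m)$ and $\widetilde{A}_m=w^\mu(A_m)$ are, respectively, a holomorphic topological disk and a holomorphic annulus with $\mod(\widetilde{A}_m)=\mod(A_m)=:M_0$, a positive constant independent of $\mu$. The hyperbolic length $l_{\mu,E}(\gamma)$ then equals the standard hyperbolic length of $\widetilde{\gamma}=w^\mu(\gamma)$ in $\overline{\mathbb{C}}\setminus\widetilde{E}$, and similarly for $l_{\mu,Q}(\gamma)$ in $\overline{\mathbb{C}}\setminus\widetilde{Q}$.

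For the first statement I argue by contradiction. A non-peripheral simple closed curve in $\overline{\mathbb{C}}\setminus E$ is determined up to isotopy by the partition it induces on $E$, and $\gamma$ is homotopic to a curve in $\overline{\mathbb{C}}\setminus Q$ if and only if this partition places $\{p_0^m,p_*^m\}$ on the same side for every $m$. So if the conclusion fails, $\gamma$ separates $\{p_0^m,p_*^m\}$ for some $m$. The core curve $\tau_m^*$ of $\widetilde{A}_m$ is a simple closed geodesic in $\overline{\mathbb{C}}\setminus\widetilde{E}$ whose homotopy class contains the embedded annulus $\widetilde{A}_m$ of modulus $M_0$; Theorem~\ref{collar} therefore bounds its length above by an explicit $L_0=L_0(M_0)$. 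Since $\tau_m^*$ separates $\{p_0^m,p_*^m\}$ from the remaining points of $\widetilde{E}$, the partitions of $\tau_m^*$ and $\widetilde{\gamma}$ are incompatible, forcing the geometric intersection number $i(\widetilde{\gamma},\tau_m^*)\ge 2$. Meanwhile Theorem~\ref{collar} also supplies $\widetilde{\gamma}$ with a hyperbolic collar of half-width $w=w(l_{\mu,E}(\gamma))\to\infty$ as $l_{\mu,E}(\gamma)\to 0$, and each essential crossing of this collar by $\tau_m^*$ contributes at least $2w$ to its length:
\[
L_0\ge l(\tau_m^*)\ge 2w\cdot i(\widetilde{\gamma},\tau_m^*)\ge 4w(l_{\mu,E}(\gamma)).
\]
Choosing $\eta$ so small that $w(\eta)>L_0/4$ produces the required contradiction.

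For the second statement, the inclusion $RQ_\mu\subset RE_\mu$ gives $l_{\mu,E}(\gamma)\le l_{\mu,Q}(\gamma)$ by Schwarz--Pick. For the matching upper bound on $l_{\mu,Q}(\gamma)$, the collar $C$ of $\widetilde{\gamma}$ in $\overline{\mathbb{C}}\setminus\widetilde{E}$ cannot cross any $\widetilde{A}_m$ essentially once $l_{\mu,E}(\gamma)$ is small enough (by the same argument). Trimming off the parts of $C$ that enter $\widetilde{A}_m\cup\widetilde{D}_m$ yields a sub-annulus $C'\subset C\cap(\overline{\mathbb{C}}\setminus\widetilde{Q})$ homotopic to $\widetilde{\gamma}$ in $RQ_\mu$ with $\mod(C')\ge\mod(C)-K_1$ for some $K_1=K_1(M_0)$. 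Theorem~\ref{collar} applied in $RQ_\mu$ then gives $l_{\mu,Q}(\gamma)\le\pi/(2(\mod(C)-K_1))$, while $\mod(C)\ge\pi/(2l_{\mu,E}(\gamma))-1$. Combining these forces $l_{\mu,Q}(\gamma)/l_{\mu,E}(\gamma)\to 1$ as $l_{\mu,E}(\gamma)\to 0$, which is exactly the desired estimate.

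The main obstacle is ensuring every estimate is genuinely uniform in $\mu$; the hypothesis $\mu\equiv 0$ on $Q\cup\bigcup_m A_m$ is precisely what secures this, freezing the shielding modulus $M_0$ in the target picture and making the constants $L_0$, $K_1$, and $\eta$ depend only on the fixed geometric data of the $A_m$'s.
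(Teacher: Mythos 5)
Your argument for the first assertion is sound and takes a genuinely different route from the paper: you reduce the claim to the topological statement that $\gamma$ fails to be homotopic into $\overline{\mathbb{C}}\setminus Q$ only if it separates some pair $\{p_0^m,p_*^m\}$, and then rule this out by playing the bounded length of the geodesic in the class of the core of $\widetilde{A}_m$ (bounded via the frozen modulus $M_0$) against the wide collar of the short geodesic $\widetilde{\gamma}$ and the even, hence $\ge 2$, intersection number. The paper instead works analytically: it uniformizes the collar situation into a round annulus $H=\{r<|z|<R\}$, uses Lemma~\ref{mod} to show each pair $\{p_0^m,p_*^m\}$ lies either in $\{|z|<r\}$ or in $\{|z|>R\}$, and then applies Koebe distortion (legitimate precisely because $\mu\equiv 0$ on $\overline{D_m}\cup A_m$) to conclude that the \emph{entire} disk $\overline{D_m}$ lies in $\{|z|<Kr\}$ or $\{|z|>R/K\}$, so that the round sub-annulus $H_K=\{Kr<|z|<R/K\}$, of modulus $\ge \mod(A)-C$, sits inside $RQ_\mu$ and in fact contains the geodesic $\gamma$. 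Your version is more elementary for assertion one, but note that it yields only a curve \emph{homotopic} to $\gamma$ inside $\overline{\mathbb{C}}\setminus Q$, whereas the paper's construction produces an explicit large annulus in $RQ_\mu$ --- and that object is exactly what the second assertion needs.

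This is where your proof has a genuine gap: the ``trimming'' step. You assert that removing $C\cap(\widetilde{A}_m\cup\widetilde{D}_m)$ from the collar $C$ leaves a sub-annulus $C'\subset RQ_\mu$ in the right homotopy class with $\mod(C')\ge \mod(C)-K_1(M_0)$, but no argument is given, and none of what precedes supports it. Knowing that $C$ does not cross $\widetilde{A}_m$ \emph{essentially} is a homotopy-level statement (zero intersection number of core curves); the trimming requires set-level control of $C\cap(\widetilde{D}_m\cup\widetilde{A}_m)$. A topological disk $\widetilde{D}_m\cup\widetilde{A}_m$ with zero essential intersection can still meet $C$ in a component that runs from one boundary component of $C$ to the other (an inessential ``crossing''), and deleting such a component from an annulus leaves a disk, not an annulus; even when no component crosses, a long thin intersection winding inside $C$ can destroy an arbitrarily large amount of modulus. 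Bounding this loss by a constant depending only on the shielding data is precisely the quantitative heart of the lemma, and it is exactly the point the paper's Koebe-distortion/round-annulus argument is designed to settle: the hypothesis $\mu\equiv 0$ on $\overline{D_m}\cup A_m$ enters not merely to freeze $\mod(\widetilde{A}_m)$ but to control, via Koebe, \emph{where the whole disk $\widetilde{D}_m$ sits} relative to the uniformized collar. To complete your proof of the second assertion you would need to supply this estimate, and the natural way to do so is to uniformize $C$ (or the collar $A$) to a round annulus and run the paper's argument. The final arithmetic deducing $l_{\mu,E}>(1-\varepsilon)l_{\mu,Q}$ from $\mod(C')\ge \pi/(2l_{\mu,E})-O(1)$ and $l_{\mu,Q}\le \pi/(2\mod(C'))$ is fine once that estimate is in hand.
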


\begin{proof}
Let $\gamma \subset RE_{\mu}$ be a non-peripheral simple closed geodesic. From Theorem~\ref{collar},
we have an annulus $A \subset RE_{\mu}$ as a collar such that $\gamma$ is the core curve of $A$
and
\begin{equation}\label{ine-1}
\frac{\pi}{2 l_{\mu, E} (\gamma)}-1<\mod(A) <
\frac{\pi}{2 l_{\mu, E} (\gamma)}.
\end{equation}

We may assume that $A$ separates $0$ and $\infty$.
Let $S_{1}$ and $S_{2}$ be the two components of $RE_{\mu}\setminus A$
such that $0 \in S_{1}$ and $\infty \in S_{2}$.  Let
$$
r = \max\{|z|\: \big{|}\: z \in  S_{1}\} \quad \hbox{and} \quad  R = \min\{ |z|\:\big{|}\: z \in
 S_{2}\}.
$$
By Lemma~\ref{mod}, when $l_{\mu, E} (\gamma)$ is small, $R/r$ is large. Consider the round annulus
$$
H = \{z\:\big{|}\: r < |z| < R\}.
$$
It follows that $H \subset A$ and that the core curve of $H$ is in
the same homotopic class as $\gamma$. By Lemma~\ref{mod} and
(\ref{ine-1}), it follows that there is a uniform constant $0< C <
\infty$ such that
\begin{equation}\label{mod-ine-2}
\mod(H) \ge \mod(A) - C
\end{equation}
holds provided that $l_{\mu, E}(\gamma)$ is small. Note that every
pair $\{p_{0}^{m}, p_{*}^{m}\}$ considered as points in $RE_{\mu}$ is contained either
in $\{z\:\big{|}\: |z| < r\}$ or in $\{z\:\big{|}\: |z| > R\}$.
Since $\mu$ is $0$ on $\overline{D_{m}} \cup A_{m}$ and $p_{0}^{m}, p_{*}^{m} \subset D_{m}$ considered as points and a domain in $RE_{\mu}$,
it follows from Koebe's distortion theorem
that there is an $1 < K < \infty$, which depends only on $\{D_{i}\}$
and $\{A_{i}\}$, such that every $\overline{D_{m}}$ considered as a domain in $RE_{\mu}$ is
contained either in $\{z\:\big{|}\: |z| < Kr\}$ or in
$\{z\:\big{|}\: |z| > R/K\}$. By (\ref{ine-1}) and
(\ref{mod-ine-2}), we have
$$
R/K > Kr
$$
provided that $l_{\mu, E}(\gamma)$ is small enough. All of these implies  that the annulus
$$
H_{K} = \{z\:\big{|}\: Kr < |z| < R/K\}
$$
is contained in $RQ_{\mu}$ considered as a sub-Riemann surface of $RE_{\mu}$ provided
that $l_{\mu, E}(\gamma)$ is small enough.

Now the first assertion of the lemma follows if we can show that
$$
\gamma \subset H_{K}
$$
provided that $l_{\mu, E}$ is small enough.
Suppose this were not true. Then there are two
cases. In the first case, there exist two points $z$ and $z'$ such
that
\begin{itemize}
\item[1.] $z \in S_{2}$ with $|z| = R$,
\item[2.] $|z'| = R/K$,
\item[3.] $\gamma$ separates $\{0, z'\}$ and $\{z, \infty\}$.
\end{itemize}
In the second case,  there exist two points $z$ and $z'$ such that
\begin{itemize}
\item[1.] $|z| = Kr$,
\item[2.] $z' \in S_{1}$ and $|z'| = r$.
\item[3.] $\gamma$ separates $\{0, z'\}$ and $\{z, \infty\}$.
\end{itemize}
Suppose we are in the first case. Note that the curve
$\gamma$ separates $A$ into two sub-annuli such that the
modulus of each of them is equal to $\mod(A)/2$. But on the
other hand,  the outer one separates $\{0, z'\}$ and $\{z,
\infty]\}$, and thus by Lemma~\ref{mod}, its modulus has an
upper bound depending only on $M$. By (\ref{ine-1}) this is
impossible when $l_{\mu, E} (\gamma)$ is small enough. The same
argument can be used to get a contradiction in the second case. This
proves the first assertion of the Lemma.

Now  let us prove the second assertion.  Let $l$ denote the
hyperbolic length of the core curve of $H_{K}$ with respect to the
hyperbolic metric of $H_{K}$. Since $H_{K} \subset RQ_{\mu}$ when $l_{\mu, E} (\gamma)$
is small enough, it follows that $l > l_{\mu, Q}$. Thus we have
$$
\mod(H_{K}) = \frac{\pi}{2l}  < \frac{\pi}{2 l_{\mu,
Q}}.
$$
From (\ref{ine-1}) and (\ref{mod-ine-2}), there is a constant $0<
C' < \infty$ such that
$$
\mod(H_{K}) \ge \frac{\pi}{2 l_{\mu, E} (\gamma)} - C'
$$
holds provided that $l_{\mu, E} (\gamma)$ is small enough.
Thus we have
$$
\frac{\pi}{2 l_{\mu,Q}}\le \frac{\pi}{2
l_{\mu, E}(\gamma)} \le \frac{\pi}{2 l_{\mu,Q}} + C'.
$$
The second assertion follows.
\end{proof}

\medskip
\begin{pro}~\label{cp}
Suppose $\mu$ is a Beltrami coefficient $\mu$ on $\overline{\mathbb{C}}$ with $\mu=0$ on $Q\cup \cup_{m=1}^{M} A_m$.
Suppose $\beta\subset \overline{\mathbb{C}}\backslash Q$ is a non-peripheral simple closed curve.
When $l_{\mu, Q}(\beta)$ sufficient small,
we have
\[
l_{\mu,Q}(\beta)\geq \frac{l_{\mu,E}(\beta)}{1+\frac{2c}{\pi}l_{\mu,E}(\beta)}
\]
where $c>1$ is a constant.
\end{pro}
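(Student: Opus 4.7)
The plan is to deduce the inequality from two classical ingredients: the collar theorem (Theorem~\ref{collar}) applied to the $RQ_{\mu}$-geodesic homotopic to $\beta$, and the Schwarz--Pick contraction for the inclusion $RQ_{\mu}\hookrightarrow RE_{\mu}$. The latter is available because $E\subset Q$ as subsets of $\overline{\mathbb{C}}$, so $\overline{\mathbb{C}}\setminus Q$ is a sub-Riemann surface of $\overline{\mathbb{C}}\setminus E$.

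First I let $\beta_{Q}^{*}\subset RQ_{\mu}$ be the unique simple closed geodesic in the homotopy class of $\beta$, with length $l_{\mu,Q}(\beta)$. Theorem~\ref{collar} yields a collar annulus $A\subset RQ_{\mu}$ around $\beta_{Q}^{*}$ satisfying
$$\mod(A)\geq \frac{\pi}{2l_{\mu,Q}(\beta)}-1.$$
The hypothesis ``$l_{\mu,Q}(\beta)$ sufficiently small'' enters here precisely to make this lower bound strictly positive. Now I view $A$ as an annulus in the larger surface $RE_{\mu}$: its core is still a non-peripheral simple closed curve homotopic to $\beta$, which is the one bit of homotopy bookkeeping to verify, and it is immediate because $\beta$ and $A$ lie in $\overline{\mathbb{C}}\setminus\overline{D}$ and every $D_{m}$ is a topological disk, so the inclusion cannot split or merge homotopy classes of curves staying outside $\overline{D}$. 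Applying Schwarz--Pick to the holomorphic inclusion $A\hookrightarrow RE_{\mu}$, the $RE_{\mu}$-hyperbolic length of the core of $A$ is bounded above by its length in the hyperbolic metric of $A$ itself, which equals $\pi/(2\mod(A))$. Since $l_{\mu,E}(\beta)$ is the infimum of $RE_{\mu}$-hyperbolic lengths over this homotopy class,
$$l_{\mu,E}(\beta)\leq \frac{\pi}{2\mod(A)}.$$

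Combining these two bounds on $\mod(A)$ gives
$$\frac{\pi}{2l_{\mu,Q}(\beta)}-1\leq \mod(A)\leq \frac{\pi}{2l_{\mu,E}(\beta)},$$
which upon rearrangement reads $1/l_{\mu,Q}(\beta)\leq 1/l_{\mu,E}(\beta)+2/\pi$, or equivalently
$$l_{\mu,Q}(\beta)\geq \frac{l_{\mu,E}(\beta)}{1+\frac{2}{\pi}l_{\mu,E}(\beta)},$$
so any $c>1$ (in fact $c=1$) will do. The main point to stress is that there is no serious analytic obstacle here: this is the ``easy half'' of the comparison chain produced in the proof of Lemma~\ref{zj}, whose genuinely difficult half was the reverse inequality $l_{\mu,E}\geq (1-\varepsilon)l_{\mu,Q}$ that required the Koebe distortion argument together with the normalization $\mu\equiv 0$ on each $\overline{D_{m}}\cup A_{m}$. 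For Proposition~\ref{cp}, the flatness hypothesis plays no essential role; only the inclusion $RQ_{\mu}\subset RE_{\mu}$ and the collar theorem are needed.
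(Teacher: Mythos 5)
Your proof is correct, and it establishes the stated inequality with the sharper constant $c=1$ (which is stronger, since the right-hand side decreases in $c$, so the conclusion ``for some $c>1$'' follows a fortiori). The overall skeleton is the same as the paper's --- sandwich the modulus of one annulus between $\tfrac{\pi}{2l_{\mu,Q}(\beta)}-\mathrm{const}$ (collar theorem in $RQ_{\mu}$) and $\tfrac{\pi}{2l_{\mu,E}(\beta)}$ (embedded essential annulus in $RE_{\mu}$) --- but the annulus you use is different, and that is a genuine simplification. The paper does not apply the upper bound to the collar $A_{\beta}$ itself; instead it re-runs the construction of Lemma~\ref{zj}, extracting from $A_{\beta}$ a round annulus $H_{K}\subset A_{\beta}$ via the Koebe distortion argument, with $\mod(H_{K})\geq \mod(A_{\beta})-c'$, and then bounds $\mod(H_{K})\leq \tfrac{\pi}{2l_{\mu,E}(\beta)}$. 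That detour is exactly where the loss $c=1+c'>1$ and the hypothesis $\mu=0$ on $Q\cup\bigcup_{m}A_{m}$ enter; your observation that $A_{\beta}$ is already an embedded annulus in $RE_{\mu}$ (because $\overline{\mathbb{C}}\setminus Q\subset\overline{\mathbb{C}}\setminus E$ carries the same conformal structure $\mu$) makes both unnecessary for this inequality. The one piece of bookkeeping you flag is stated a bit off-target: what must be checked is not that the inclusion avoids splitting or merging classes, but simply that the core of $A_{\beta}$, which is freely homotopic to $\beta$ in $\overline{\mathbb{C}}\setminus Q$ and hence a fortiori in $\overline{\mathbb{C}}\setminus E$, remains \emph{non-peripheral} in $\overline{\mathbb{C}}\setminus E$ so that $l_{\mu,E}(\beta)$ is defined and the maximal-annulus bound applies; this holds because each capped disk $D_{m}$ contains the two points $p_{0}^{m},p_{*}^{m}$ of $E$, so each side of $\beta$ still contains at least two points of $E$. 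With that said, your argument is complete and cleaner than the paper's.
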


\begin{proof}
Suppose $\beta$ is a closed geodesic in $RQ_{\mu}$. Since $l_{\mu, Q}(\beta) >l_{\mu, E} (\beta)$, if
$l_{\mu, Q}(\beta)$ is sufficiently small, then $l_{\mu, E} (\beta)$ is sufficiently small, Lemma~\ref{zj} applies.

For $l_{\mu, Q}(\beta)$ sufficiently small, there exists a collar $A_\beta$ of $\beta$ (refer to the proof of Lemma~\ref{zj})
such that
\[
\frac{\pi}{2}\frac{1}{l_{\mu, Q}(\beta)}-1<\mod(A_\beta)<\frac{\pi}{2}\frac{1}{l_{\mu, Q}(\beta)}
\]
and $\mod(A_\beta)$ sufficiently large. Suppose $A_\beta$ separates $0$ and $\infty$. Similar to the proof of Lemma~\ref{zj},
we get a round annulus $H_K$ such that $H_K\subset A_\beta$ and the core curve of $H_K$ is homotopic to $\beta$ in $RE_{\mu}$.
Since $l_{\mu, Q}(\beta)$ sufficiently small, similar to the proof of Lemma~\ref{zj}, there exists a constant $0<c'<\infty$ such that
\[
\mod(H_K)\geq \mod(A_\beta)-c'\geq \frac{\pi}{2}\frac{1}{l_{\mu, Q}(\beta)}-1-c'
=\frac{\pi}{2}\frac{1}{l_{\mu, Q}(\beta)}-c
\]
where $c=1+c'>1$. Since, from the proof of Lemma~\ref{zj},
\[
\mod(H_K)\leq \frac{\pi}{2}\frac{1}{l_{\mu, E}(\beta)},
\]
so
\[
\frac{\pi}{2}\frac{1}{l_{\mu, Q}(\beta)}-c\leq \frac{\pi}{2}\frac{1}{l_{\mu, E}(\beta)},
\]
i.e.
\[
\frac{1}{l_{\mu, Q}(\beta)}\leq \frac{1+\frac{2c}{\pi}l_{\mu, E}(\beta)}{l_{\mu, E}(\beta)}.
\]
This implies the proposition.
\end{proof}

Recall the Teichm\"uller space $T_f$ for a sub-hyperbolic semi-rational branched covering $f$ is defined
the Teichm\"uller space modeled on $(\overline{\mathbb{C}}\backslash Q,X)$.
Similarly, we can define the Teichm\"uller space is the Teichm\"uller space modeled
on $(\overline{\mathbb{C}}\backslash Q,X)$ for sub-hyperbolic semi-rational {\em type} branched covering.
Based on the definition, the points of Teichm\"uller space can be represented by $[\mu]$ where $\mu$ is the Beltrami
coefficient of the normalized quasiconformal mapping $w^{\mu}$ and $[\mu]$ is the equivalent class of $\mu$.
Let $\mu$ be a Beltrami coefficient defined on $(\overline{\mathbb{C}}\backslash Q,X)$.
Extended it to the Beltrami coefficient $Ext(\mu)(z)$ on $\overline{\mathbb{C}}$ by setting
\[
Ext(\mu)(z)=
\begin{cases}
\mu(z)\ \ for\  z\in (\overline{\mathbb{C}}\backslash Q,X),\\
0\ \ for\  otherwise.
\end{cases}
\]
If no confusion, we will simply use $\mu$ to denote $Ext(\mu)$ or $\mu$.

Remember that we started from a sub-hyperbolic semi-rational branched covering $f$
which is not CLH-equivalent to a rational map. For a given point $\tau_0$ in Teichm\"uller space, it can be denoted by $[\mu_0]$, that is, the Beltrami coefficient is $\mu_0$ on $(\overline{\mathbb{C}}\backslash Q,X)$ while $0$ on other points. $\tau_n$ is defined by $\tau_n=\sigma_f^n(\tau_0)=[(f^*)^n(\mu_0)]$.
 Decomposing the Riemann surface $(\overline{\mathbb{C}}\backslash Q,X)$
along canonical Thurston obstruction $\Gamma_{c}$, we got the thick-thin decomposition.
The exact same assumption as that in the decomposition section and in the extension section,
we suppose $P_i^0$ is periodic with period $k$. Let $\widetilde{f}$ be the map defined in (\ref{newmap}) and
\[
\widetilde{P}_0=P_i^0\bigcup (\bigcup_{i=1}^p D(\gamma_i))
\]
where $\{D(\gamma_i)\}$ are the disks in the extension section. Here $\widetilde{P}_0$ can be though
as the Riemann sphere $\overline{\mathbb{C}}$ as our starting space.

Let $\tau_0=[\mu_0]$ be a given complex structure on $\overline{\mathbb{C}}\backslash Q$
which is extended to a complex structure on $\overline{\mathbb{C}}$ by setting $\mu_{0}=0$ on $Q$.
Define $\widetilde{\tau}_0=[\widetilde{\mu}_0]$ on $\overline{\mathbb{C}}$ as
\[
\widetilde{\mu}_0(z)=
\begin{cases}
\mu_0(z),& z\in P_i^0;\\
0, & z\in \cup_{i=1}^p D(\gamma_i).
\end{cases}
\]
Using the induced pullback maps $\sigma_{f^k}$ and
$\sigma_{\widetilde{f}}$ on the corresponding Teichm\"uller spaces,
we get two sequences of points on the corresponding Teichm\"uller spaces:
$$
\tau_n=\sigma^n_{f^k}(\tau_0) =[\mu_{n}] \quad \hbox{and}\quad \widetilde{\tau}_n=\sigma_{\widetilde{f}}^n (\widetilde{\tau}_0) =[\widetilde{\mu}_{n}].
$$

Let $\tau_n'''=[\mu_n''']$ be the complex structure on
$\overline{\mathbb{C}}$ as
\[
\mu_n'''(z)=
\begin{cases}
\mu_n(z), & z\in P_i^0;\\
0, & z\in \cup_{i=1}^p D(\gamma_i).
\end{cases}
\]
We would like to point out that $\tau_n'''\neq \widetilde{\tau}_{n}$.

Let $\tau_n''=[\mu_n'']$ be the complex structure on
$\overline{\mathbb{C}}\backslash E$, where $E=P_1\cup
(\cup_{m=1}^M\{p_0^m,p_*^m\})$ defined in (\ref{newset}), by setting
\[
\mu_n''(z)=\mu_n'''(z),\ \ \ z\in \overline{\mathbb{C}}\backslash
P_1^*
\]
where $P_1^*=P_f|_{P_i^0}\cup (\cup_{m=1}^M \{p_{0}^{m},p_*^m\})$.
Let $\tau_n'=[\mu_n']$ be the complex structure on
$\overline{\mathbb{C}}\backslash E$ by setting
\[
\mu_n'(z)=\widetilde{\mu}_n(z),\ \ \ z\in
\overline{\mathbb{C}}\backslash P_1^*.
\]
Therefore, we got five different complex structures
$$
\tau_n=[\mu_n], \;\; \widetilde{\tau}_n=[\widetilde{\mu}_n],\;\; \tau_n'=[\mu_n'],\;\;
\tau_n''=[\mu_n''],\;\; \hbox{and}\ \tau_n'''=[\mu_n''']
$$
on two different surfaces.
We now list relationships between those complex structures as a proposition which is helpful for a better understanding of the
procedure used in the proof of our main theorem.

\medskip
\begin{pro}\label{relationship} Suppose $\tau_n, \widetilde{\tau}_n, \tau_n', \tau_n'', \tau_n'''$ are the above complex structures. Then
\begin{itemize}
\item[(1)] $\mu_n'(z)=\widetilde{\mu}_{n}(z),\ \ z\in \overline{\mathbb{C}}\backslash P_1^*$;
\item[(2)] $\mu_n''(z)=\mu_n'(z), \;  z\in \overline{\mathbb{C}}\backslash (E\cup (\cup_{l=1}^n \widetilde{f}^{-l}(\cup_{i=1}^p D(\gamma_i) )))$,\\
$\mu_n''=\widetilde{\mu}_{n}, \ \ z\in \overline{\mathbb{C}}\backslash (P_1^* \cup (\cup_{l=1}^n \widetilde{f}^{-l}(\cup_{i=1}^p D(\gamma_i))))$;
\item[(3)] $\mu_n'''(z)=\mu_n''(z),\ \ z\in \overline{\mathbb{C}}\backslash P_1^*$,\\
$\mu_n'''(z)=\mu_n'(z),\ \ z\in P_i^0\backslash (P_1^* \cup (\cup_{l=1}^n \widetilde{f}^{-l}(\cup_{i=1}^p D(\gamma_i) )))$,\\
$\mu_n'''(z)=\widetilde{\mu}_n(z),\ \ z\in \overline{\mathbb{C}}\backslash (P_{1}^*\cup (\cup_{l=1}^n \widetilde{f}^{-l}(\cup_{i=1}^p D(\gamma_i) )))$;
\item[(4)] $\mu_n=\mu_n''',\ \ z\in P_i^0 \backslash P_f|_{P_i^0}$,\\
$\mu_n=\mu_n'',\ \ z\in P_i^0\backslash P_1^*$,\\
$\mu_n=\mu_n',\ \ z\in P_i^0\backslash (P_1^* \cup (\cup_{l=1}^n \widetilde{f}^{-l}(\cup_{i=1}^p D(\gamma_i) ))),$\\
$\mu_n=\widetilde{\mu}_n,\ \ n\geq 1,\ \ z\in P_i^0\backslash (P_f|_{P_i^0} \cup
(\cup_{l=1}^n \widetilde{f}^{-l}(\cup_{i=1}^p D(\gamma_i) ))$,\\
$\mu_0=\widetilde{\mu}_0,\ \ z\in P_i^0 \backslash P_f|_{P_i^0}$.
\end{itemize}
\end{pro}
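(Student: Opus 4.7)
The plan is to prove Proposition~\ref{relationship} by induction on $n$, building the equalities systematically from the definitions. The base case $n=0$ holds by direct inspection: $\widetilde{\mu}_0$ equals $\mu_0$ on $P_i^0$ and vanishes on $\cup_{i=1}^p D(\gamma_i)$, while $\mu_0'$, $\mu_0''$, and $\mu_0'''$ were defined as explicit restrictions of $\widetilde{\mu}_0$, $\mu_0'''$, and an extension of $\mu_0$, respectively, so that all four parts of the proposition hold trivially at $n=0$.

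The crucial observation for the inductive step is that the extended map $\widetilde{f}$ constructed in (\ref{newmap}) agrees with $f^k$ exactly on $P_i^k$; that is, $\widetilde{f}|_{P_i^k}=f^k|_{P_i^k}$. Consequently, for any $z\in P_i^k$ and any Beltrami coefficient $\nu$, the pullback formulas give $(\widetilde{f}^*\nu)(z)=((f^k)^*\nu)(z)$. In particular, if $\widetilde{\mu}_n$ and $\mu_n$ agree at the point $\widetilde{f}(z)=f^k(z)$, then $\widetilde{\mu}_{n+1}(z)=\mu_{n+1}(z)$.

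Parts (1) and the first equation of (3) hold at every level $n$ directly from the way $\mu_n'$ and $\mu_n''$ were defined as restrictions to $\overline{\mathbb{C}}\backslash E$. The first equation of (4), $\mu_n=\mu_n'''$ on $P_i^0\setminus P_f|_{P_i^0}$, follows from the definition of $\mu_n'''$, which copies $\mu_n$ on $P_i^0$. The heart of the proposition is the last family of equations in (4), namely $\mu_n=\widetilde{\mu}_n$ on $P_i^0\setminus(P_f|_{P_i^0}\cup\bigcup_{l=1}^n\widetilde{f}^{-l}(\bigcup_{i=1}^p D(\gamma_i)))$ for $n\geq 1$. I would prove this by induction on $n$: if $z$ lies in the indicated set at level $n+1$, then the exclusion of $\widetilde{f}^{-1}(\cup_i D(\gamma_i))$ forces $\widetilde{f}(z)\in P_i^0$, hence $z\in P_i^k$, and moreover $\widetilde{f}(z)$ lies in the corresponding excluded set at level $n$ (since forward-pushing the remaining exclusions by $\widetilde{f}$ gives the level-$n$ list). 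The inductive hypothesis combined with the identity $\widetilde{f}|_{P_i^k}=f^k|_{P_i^k}$ then yields $\mu_{n+1}(z)=\widetilde{\mu}_{n+1}(z)$. All remaining identifications in parts (2), (3), and (4) follow by combining these basic relations with the support definitions of each $\mu_n^{(\cdot)}$.

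The sole obstacle is the bookkeeping of supports: at each step one must identify precisely which preimages under $\widetilde{f}$ of the added disks meet $P_i^0$ and how the excluded sets $\bigcup_{l=1}^n\widetilde{f}^{-l}(\bigcup_i D(\gamma_i))$ grow with $n$. There is no analytic content; the argument is purely combinatorial-dynamical, reducing entirely to the decomposition $P_i^0=P_i^k\cup(P_i^0\setminus P_i^k)$ together with the explicit piecewise description of $\widetilde{f}$ given by (\ref{newmap}).
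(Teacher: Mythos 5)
The paper does not actually supply a proof of Proposition~\ref{relationship}; it is stated as a ``list of relationships \dots helpful for a better understanding'' and the exposition moves directly on. Against that backdrop, your plan is the natural way to substantiate it and is consistent with the paper's implicit reading: the statements are definitional unwindings, and the only nontrivial content is the last family of equations in item~(4), which is exactly where you invoke the agreement $\widetilde{f}|_{P_i^k}=f^k|_{P_i^k}$ and induct on $n$ while excluding the growing preimage sets $\bigcup_{l\leq n}\widetilde{f}^{-l}\bigl(\bigcup_i D(\gamma_i)\bigr)$.

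One caveat is worth spelling out, since your inductive step glosses over it. When you say ``forward-pushing the remaining exclusions by $\widetilde{f}$ gives the level-$n$ list,'' this is true for the preimage-of-disks part of the excluded set, but not for $P_f|_{P_i^0}$ (nor $P_1^*$): a point $z\in P_i^k\setminus P_f$ can certainly have $f^k(z)\in P_f$, since $P_f$ is not backward-invariant. Read pointwise, the induction hypothesis therefore does not directly apply at $\widetilde{f}(z)$. The remedy is that $\mu_n$, $\widetilde{\mu}_n$, etc.\ are Beltrami coefficients, hence $L^\infty$ classes defined up to sets of Lebesgue measure zero, and $P_f|_{P_i^0}$, $P_1^*$, $E$ are all measure-zero sets whose $f^k$-preimages are again measure zero. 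So the equalities in the proposition should be understood in the a.e.\ sense, and with that reading the inductive step closes. With that clarification added, your argument is correct and fills in precisely the gap the paper leaves to the reader.
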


The following theorem  is proved in~\cite{DH} (see also~\cite{Se}). This theorem is for a post-critically finite branched covering. For a sub-hyperbolic semi-rational branched covering, since $\sharp P_f=\infty$, this theorem can not be used directly. We will use the idea in the theorem in the proof of our main theorem.

\medskip
\begin{thm}[\cite{DH}]~\label{cov}
Suppose $f:\mathbb{S}^2\rightarrow \mathbb{S}^2$ is a post-critically finite branched covering map. Then there exist two covering maps  $\pi_1:T_f\rightarrow M_f'$, $\pi_2:M_f'\rightarrow M_f$ and a map $\widetilde{\sigma}_f: M_f'\rightarrow M_f$ such that $\pi_2$ is finite and
$\pi\circ \sigma_f=\widetilde{\sigma}_f\circ\pi_1$ where $\pi=\pi_2\circ\pi_1$, $M_f$ is the moduli space of $\mathbb{S}^2\backslash P_f$ and $M_f'$ is the intermediate moduli space.
\end{thm}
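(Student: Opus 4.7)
The plan is to realize $M_f$ as the standard moduli space, that is, the quotient $T_f/\mathrm{PMod}(\overline{\mathbb{C}},P_f)$ of the Teichm\"uller space by the pure mapping class group acting by change of marking, so that $\pi:T_f\to M_f$ is the natural covering projection. For the intermediate moduli space $M_f'$, I would single out the subgroup $G\subset \mathrm{PMod}(\overline{\mathbb{C}},P_f)$ consisting of those isotopy classes $[\phi]$ rel $P_f$ that admit a lift $[\tilde\phi]\in \mathrm{PMod}(\overline{\mathbb{C}},f^{-1}(P_f))$ satisfying $f\circ\tilde\phi\simeq \phi\circ f$ rel $f^{-1}(P_f)$, and set $M_f':=T_f/G$. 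The two quotient maps $\pi_1:T_f\to M_f'$ and $\pi_2:M_f'\to M_f$ are then covering projections with $\pi_2\circ\pi_1=\pi$, and $\pi_2$ will be finite once one checks that $[\mathrm{PMod}(\overline{\mathbb{C}},P_f):G]<\infty$.

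The finiteness of this index reduces to a classical covering-space fact: since $f$ restricts to an unbranched covering $\overline{\mathbb{C}}\setminus f^{-1}(P_f)\to \overline{\mathbb{C}}\setminus P_f$ of degree $d$, it corresponds to a subgroup $H\subset \pi_{1}(\overline{\mathbb{C}}\setminus P_f)$ of index $d$, and $G$ is precisely the set of mapping classes whose induced outer automorphism of $\pi_{1}(\overline{\mathbb{C}}\setminus P_f)$ preserves the conjugacy class of $H$. Because a finitely generated group has only finitely many subgroups of index $d$, this stabilizer has finite index in $\mathrm{PMod}(\overline{\mathbb{C}},P_f)$. Next I would check the equivariance $\sigma_f([\phi]\cdot\tau)=[\tilde\phi]\cdot\sigma_f(\tau)$ for $[\phi]\in G$ with lift $[\tilde\phi]$; this is a direct consequence of the pull-back definition of $\sigma_f$ together with Lemma~\ref{pullback}. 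Since $P_f\subset f^{-1}(P_f)$ in the post-critically finite setting, the restriction of $[\tilde\phi]$ to $P_f$ yields a well-defined element of $\mathrm{PMod}(\overline{\mathbb{C}},P_f)$, so $\sigma_f$ descends to a map $\widetilde\sigma_f:M_f'\to M_f$ satisfying $\pi\circ\sigma_f=\widetilde\sigma_f\circ\pi_1$ by construction.

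The main obstacle I anticipate is the finite-index claim for $G$. One must verify not only that finitely many subgroups are conjugate to $H$, but also that the passage between outer automorphisms of $\pi_{1}(\overline{\mathbb{C}}\setminus P_f)$ and genuine isotopy classes of mapping classes preserves finiteness, and that any lift of the identity in $\mathrm{PMod}(\overline{\mathbb{C}},P_f)$ is isotopic to the identity rel $f^{-1}(P_f)$ (so that $G$ is a bona fide subgroup rather than merely a union of cosets). Once these ingredients are in place, the $G$-equivariance of $\sigma_f$ and the descent to $\widetilde\sigma_f$ are essentially formal and follow immediately from the pull-back construction and Lemma~\ref{pullback}.
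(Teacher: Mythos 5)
The paper offers no proof of this statement---it is quoted from Douady--Hubbard (see also Selinger)---and your reconstruction is exactly the standard argument from that source: take $M_f'=T_f/G$ for the subgroup $G$ of liftable mapping classes, deduce $[\mathrm{PMod}(\overline{\mathbb{C}},P_f):G]<\infty$ from the fact that the finitely generated group $\pi_1(\overline{\mathbb{C}}\setminus P_f)$ has only finitely many subgroups of index $d$ (so the stabilizer of the conjugacy class of $f_*\pi_1(\overline{\mathbb{C}}\setminus f^{-1}(P_f))$ has finite index), and descend $\sigma_f$ via the equivariance $\sigma_f([\phi]\cdot\tau)=[\tilde\phi]\cdot\sigma_f(\tau)$. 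The only caveat, which you partly flag, is that an arbitrary lift $\tilde\phi$ merely permutes each fiber $f^{-1}(y)$, $y\in P_f$, rather than fixing $P_f$ pointwise, so one should either impose that the lift restrict to the identity on $P_f$ as an additional (still finite-index, since the relevant permutation group is finite) condition in the definition of $G$, or pass to the unordered moduli space; with that adjustment your descent of $\sigma_f$ to $\widetilde{\sigma}_f$ is correct and coincides with the cited proof.
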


%
%
%



\begin{proof}[Proof of Theorem~\ref{mainth} (the Main Theorem)]
If $\widetilde{f}$ is a post-critically finite type branched covering
with hyperbolic orbifold, by using a similar method to that in~\cite{Se,BY},
we can prove that $\widetilde{f}$ has no Thurston obstruction
and thus combinatorially equivalent to a rational map.
Our main effort in this paper is to prove the theorem when
$\widetilde{f}$ is a sub-hyperbolic semi-rational type branched covering.

Now suppose $\tilde{f}$ is a sub-hyperbolic semi-rational branched covering.
First we prove that $\{\tau_{n}\}$ has no any convergent subsequence if $\widetilde{f}$ is
not CLH-equivalent to a rational map. We prove it by contradiction as follows.

Suppose there exists a sub-sequence $\{\tau_{n_k}\}$ of
$\{\tau_{n}\}$ convergence. Since $\widetilde{f}$ is holomorphic in
holomorphic disks, $\{\tau_{n_k}''\}$ is also convergent. Suppose
$\tau_{n_k}''\rightarrow \tau_{0,*}''$. Then
$\{\tau_{n_k+1}\}=\{\sigma_{f}(\tau_{n_k})\}$ is
convergent since $\sigma_{f}$ is continuous.
By the same reason, we get $\{\tau_{n_k+1}''\}$ convergent.
Suppose $\tau_{n_k+1}''\rightarrow \tau_{1,*}''$.
$\tau_{1,*}''=\sigma_{\widetilde{f}}(\tau_{0,*}'')$. Continue the
above procedure, we get the convergent sequence $\{\tau_{n_k+i}''\}$
which converge to $\tau_{i,*}''\ (i=0,1,\cdots)$.
Any non-peripheral simple closed curve $\gamma$ in $P_{i}^{0}$ has common
lower bound hyperbolic length $K>0$, that is, for any $n_k$ and any
$i$, $l_{\tau_{n_k+i}} (\gamma)\geq K$ (see Lemma~\ref{bigconst}).
By Proposition~\ref{samebig}, there exists a constant $\tilde{K}=\tilde{K}(K)>0$
such that $l_{\tau_{n_k+i}''}(\gamma)\geq \tilde{K}$.
Since $\tau_{n_k+i}''\rightarrow \tau_{i,*}''\ (n_k\rightarrow \infty)$,
so $l_{\tau_{i,*}''}(\gamma)\geq \tilde{K}$. We use $\pi^*$ denote the mapping
which replace the points of $P_{\widetilde{f}}$ by $E$.
Write $\pi^*(\widetilde{\tau}_i)=\sigma_{\widetilde{f}}^i(\tau_{0,*}'')$.
Since $\widetilde{f}$ is not CLH-equivalent to a rational map, there exists
at least one non-peripheral simple closed curve $\beta$ such that
$l_{\widetilde{\tau}_i}(\beta)\rightarrow 0$ as $i\rightarrow\infty$ from Theorem~\ref{cjthm} .
By Proposition~\ref{cp}, we know $l_{\tau_{i,*}''}(\beta)\rightarrow 0$.
This is a contradiction.

Suppose $\{\tau_{n_k}\}$ is a subsequence which is not convergent.
By Proposition~\ref{relationship}, $\{\tau_{n_k}''\}$ does not convergent.
Let $\pi''$ be the mapping which send $\tau_n$ to $\tau_n''$.
Any non-peripheral simple closed curve $\gamma$ in $P_{i}^{0}$ has common
lower bound hyperbolic length $K>0$, that is, for any $n_k$ and any
$i$, $l_{\tau_{n_k+i}} (\gamma)\geq K$ (see Lemma~\ref{bigconst}).
By Proposition~\ref{samebig}, there exists a constant $\tilde{K}=\tilde{K}(K)>0$
such that $l_{\tau_{n_k+i}''}(\gamma)\geq \tilde{K}$
for any non-peripheral simple closed curve $\gamma$ in
$\overline{\mathbb{C}}\backslash (P_{1}\cup (\cup_{m=1}^M \{p_{0}^{m},p_*^m\}))$.

Let $E=P_{1}\cup (\cup_{m=1}^M \{p_{0}^{m},p_*^m\})$ be the set defined in (\ref{newset}).
Consider the Teichm\"uller space $T=T(\overline{\mathbb{C}}\backslash E)$.
Let $M=M (\overline{\mathbb{C}}\backslash E)$ be the moduli space.
Let $\pi: T\to M$ be the covering map. For the subsequence $\{\tau_{n_k}\}$,
then $\{ \pi''(\tau_{n_k}))\}$ is a sequence in $T$. Similar to Theorem~\ref{cov}, we have an intermediate
moduli space $M'$ and $\pi_{1}: T\to M'$ and $\pi_{2}: M'\to M$ satisfying the condition in Theorem~\ref{cov}.
Then $\{m_{n_k}\}=\{\pi_1 (\pi''(\tau_{n_k})) \}$ is a sequence in $M'$.
From the previous paragraph and the Mumford compactness theorem, $\{m_{n_k}\}$ converges to some $\widehat{m}_0$ in $M'$.
By selecting a sub-subsequence, which we still denote as $\{m_{n_k}\}$, such that
$m_{n_k+i}\rightarrow \widehat{m}_i\ (i=0,1,\cdots)$ in $M'$. The set $\{\widehat{m}_i\}\ (i=0,1,\cdots)$ lies in a compact subset in $M'$.

Suppose $d(m_{n_k},\widehat{m}_0)<\varepsilon$ and $d(m_{n_k+1},\widehat{m}_1)<\varepsilon$.
Then there exists a point $\widehat{\tau}_k$ such that $d(\tau_{n_k}'',\widehat{\tau}_k)=d(m_{n_k},\widehat{m}_0)<\varepsilon$
and $\pi_1(\tau_{n_k}'')=m_{n_k}$, $\pi_1(\widehat{\tau}_k)=\widehat{m}_0$. So
\[
d(\widehat{m}_1, \pi_1(\pi^*(\sigma_{\widetilde{f}}(\widehat{\tau}_k))))\leq d(\widehat{m}_1,m_{n_k+1})+
d(m_{n_k+1},\pi_1(\pi^*(\sigma_{\widetilde{f}}(\widehat{\tau}_k))))
\]
\[
\leq \varepsilon+d(\tau''_{n_k+1},\pi^*(\sigma_{\widetilde{f}}(\widehat{\tau}_k)))
\leq \varepsilon+q d(\tau_{n_k}'',\widehat{\tau}_k)
\leq (q+1)\varepsilon
\]
where $q\in [0,1)$ is a constant only depending on the compact set which contains all $\{\widehat{m}_i\}\ (i=0,1,\cdots)$ in $M'$.

Note that $\pi_{2}: M'\to M$ is a cover of finite degree and $\pi: T\to M$ is a cover of infinite degree.
Since $\pi(\pi^*(\sigma_{\widetilde{f}}(\widehat{\tau}_k)))=\pi_2(\widehat{m}_1)$,
we have $\pi_1(\pi^*(\sigma_{\widetilde{f}}(\widehat{\tau}_k)))$ lies in the fiber $\pi_2^{-1} (\pi_2(\widehat{m}_1))$.
Since $\pi_2$ is a cover of finite degree, set $c_i$ to be the minimal distance between any two different points
in the fiber $\pi^{-1}_2 (\pi_2(\widehat{m}_i))\ (i=0,1,\cdots)$. From the above, we can conclude $\pi_1(\pi^*(\sigma^i_{\widetilde{f}}(\widehat{\tau}_k)))=\widehat{m}_i$ if $d(m_{n_k+t},\widehat{m}_i)<\varepsilon\ (t=0,1,\cdots,i)$ and $\varepsilon \leq \frac{1}{q+1}\min\{c_1,c_2,\cdots,c_i\}$.

For any given $i$,
\[
d(\widehat{m}_i,\widehat{m}_{i+1})= d(\pi^*(\sigma^i_{\widetilde{f}}(\widehat{\tau}_k)),\pi^*(\sigma^{i+1}_{\widetilde{f}}(\widehat{\tau}_k)))
\]
\[
\leq q^i d(\widehat{\tau}_k,\widehat{\tau}_{k+1})=q^i d(\widehat{m}_0,\widehat{m}_1).
\]
Since the above estimate does not depend on the selection of $\widehat{\tau}_k$ as long as $\{\widehat{m}_i\}\ (i=0,1,\cdots)$ lies in the given compact subset of $M'$. So for any $i$
\[
d(\widehat{m}_i,\widehat{m}_{i+1})\leq q^i d(\widehat{\tau}_k,\widehat{\tau}_{k+1})=q^id(\widehat{m}_0,\widehat{m}_{1}).
\]
Hence $\{\widehat{m}_i\}$ converges to some $\widehat{m}$ which lies in the given compact subset of $M'$.
So there exists some $\tau\in T$ such that $\pi_1(\tau'')=\widehat{m}$. Hence $\tau$ lies in the fiber $\pi_1^{-1}(\widehat{m})$ and $\sigma_{\widetilde{f}}(\tau)$ lies in the fiber $\pi^{-1} (\pi_2(\widehat{\pi}))$. Since
\[
d(\tau'',\pi^*\circ\sigma_{\widetilde{f}}(\tau''))
\]
\[
\leq d(\tau'',\pi^*\circ\sigma^t_{\widetilde{f}}(\widehat{\tau}_k))
+d(\pi^*\circ\sigma^t_{\widetilde{f}}(\widehat{\tau}_k),\pi^*\circ\sigma^{t+1}_{\widetilde{f}}(\widehat{\tau}_k))
\]
\[
+d(\pi^*\circ\sigma^{t+1}_{\widetilde{f}}(\widehat{\tau}_k),\pi^*\circ\sigma_{\widetilde{f}}(\tau''))
\]
\[
\leq 2\varepsilon+q^t d(\widehat{m}_0,\widehat{m}_1),
\]
we can select $\varepsilon$ sufficiently small and $t$ sufficiently large such that the lower bound of Teichm\"uller space between different elements in the fiber $\pi^{-1}\circ \pi_2(\widehat{m})$ larger than $2\varepsilon+q^t d(\widehat{m}_0,\widehat{m}_1)$.
So $\tau=\pi^*\circ \sigma_{\widetilde{f}}(\tau)$. Hence $(\pi^*)^{-1}(\tau)=\sigma_{\widetilde{f}}\circ(\pi^*)^{-1}(\tau)$ which implies $\sigma_{\widetilde{f}}$ has a fixed point in the Teichm\"uller space $T_f$. Thus $\widetilde{f}$ is equivalent to a rational map.
It completes the proof.
\end{proof}

\end{document}